\title{The modularity conjecture holds for linear idempotent varieties}
\author{Wolfram Bentz \\
\small{Centro de \'Algebra da Universidade de Lisboa} \\
\small{Av. Prof. Gama Pinto, 2} \\
\small{1649-003 Lisboa, Portugal, wfbentz@fc.ul.pt} \\ 
\and  Lu\'\i s Sequeira\\
\small{Centro de \'Algebra da Universidade de Lisboa} \\
\small{Av. Prof. Gama Pinto, 2} \\
\small{ 1649-003 Lisboa, Portugal} \\{\footnotesize \&}\\
\small{Departamento de Matem\'atica}\\
\small{Faculdade de Ci\^encias, Universidade de Lisboa}\\
\small{1749-016 Lisboa, Portugal, lfsequeira@fc.ul.pt}}
\newcommand{\eps}{\epsilon}
\newcommand{\V}{\ensuremath{\mathcal{V}}}
\newcommand{\M}{\ensuremath{\mathcal{M}}}
\newcommand{\N}{\ensuremath{\mathbb{N}}}
\DeclareMathOperator{\LL}{\textbf{L}}
\newcommand{\set}[1]{\{\,#1\,\}}
\renewcommand{\L}{\mathbf{L}}
\theoremstyle{plain}
\newtheorem{theorem}{Theorem}[section]
\newtheorem{lemma}[theorem]{Lemma}
\newtheorem{corollary}[theorem]{Corollary}
\newtheorem{claim}[theorem]{Claim}
\newtheorem{modconj}{Modularity Conjecture}
\theoremstyle{remark}
\newtheorem{example}{Example}
\theoremstyle{definition}
\newtheorem{defn}[theorem]{Definition}
\begin{document}
\maketitle
\begin{abstract}
The ``Modularity Conjecture'' is the assertion that the join of two nonmodular varieties is nonmodular. We establish the veracity of this conjecture for the case of linear idempotent varieties.
We also establish analogous results concerning $n$-permutability for some $n$, and the satisfaction of nontrivial congruence identities. Our theorems require a technical result about the equational theory of linear
varieties, which might be of independent interest.

\vskip 2mm

\noindent\emph{$2010$ Mathematics Subject Classification\/}. 08B10,~08B05,~03C05.

\vskip 2mm

\noindent \emph{Keywords}: Interpretability lattice, congruence modularity, derivative, linear variety.

\end{abstract}

    \maketitle

\section{Introduction}

   The lattice $\LL$ of interpretability types was introduced in
    \cite{Ne74} and thoroughly studied in \cite{GT84}.
    Maltsev conditions, which are
    associated with many important properties of varieties --- such as
    permutability or distributivity of congruence lattices --- correspond
    very nicely to filters of $\LL$.

We assume the reader is familiar with the basic notions of
interpretability of varieties and Maltsev conditions. For the
relevant concepts, the reader is referred to \cite{GT84} or \cite{Se01}.

    The question of whether a given Maltsev condition may be implied
    by, or equivalent to,
    the conjunction of two weaker conditions translates directly
    into the question of whether the corresponding Maltsev filter
    fails to be
    prime, or indecomposable.

    It was shown by Garcia and Taylor \cite{GT84} that the filter of congruence
    distributive varieties is the proper intersection of larger filters. A similar result holds for the filter of all varieties with a near-unanimity operation~\cite{Se03}.

    On the other hand, \cite{GT84} contains the following outstanding
    \emph{modularity conjecture}:

    \begin{modconj}
    In $\LL$, the filter $\M$ of congruence modular varieties is
    prime.
    \end{modconj}

    In other words, the modularity conjecture states that the join of two varieties cannot have ``Day terms'' unless they already exist in one of them.

    It was also conjectured in \cite{GT84} that the filter of congruence
    permutable varieties is prime; this was proved  by Steven
    Tschantz \cite{Ts96}.
    This ``permutability conjecture'' might be arguably easier than the modularity conjecture, and yet Tschantz's proof is extremely difficult and remains unpublished.

        Some work on the modularity conjecture was done in \cite{Se01}. The focus there was on the form of Day terms in a potential counterexample,
         and it was shown that if one exists, the required Day terms must be rather involved, thus ruling out the possibility of an ``easy'' counterexample (see \cite{Se06}).

In the present paper we take a different approach, focusing on varieties of a specific form.

    We use the notion of \emph{derivative} introduced by Dent, Kearnes and Szendrei in \cite{DKS12}, and the results obtained there, to prove that the modularity conjecture holds when restricted to linear idempotent varieties (all the relevant definitions appear in the next section).

    The main result of this paper is:

\begin{theorem}\label{thm:modconjlinid}
 Let $\V_{1}$ and $\V_{2}$ be varieties axiomatized by linear idempotent identities. If $\V_{1} \vee \V_{2}$ is congruence modular, then either $\V_{1}$ or $\V_{2}$ is congruence modular.
\end{theorem}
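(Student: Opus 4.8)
The plan is to work in the lattice $\LL$ of interpretability types and argue by contraposition: assuming neither $\V_1$ nor $\V_2$ is congruence modular, I want to show the join $\V_1 \vee \V_2$ is not congruence modular either. The central tool will be the \emph{derivative} of Dent, Kearnes and Szendrei. The key insight I would exploit is that for a variety $\V$, congruence modularity is governed by the existence of Day terms, and these Maltsev conditions interact well with the derivative operation on linear idempotent varieties. So the first step is to translate ``failure of congruence modularity'' into a structural statement about the absence of a certain chain of Day terms, and then to understand how the join operation in $\LL$ combines the term structure of $\V_1$ and $\V_2$.

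The second step would be to invoke the technical result promised in the abstract about the equational theory of linear varieties. For linear idempotent identities, the terms appearing in any consequence of the axioms should themselves be controlled --- roughly, a linear identity provable in $\V_1 \vee \V_2$ should decompose into pieces provable in $\V_1$ and in $\V_2$ separately. The plan is to use this decomposition to show that if $\V_1 \vee \V_2$ had Day terms (witnessing congruence modularity), then by analyzing the linear structure of these terms one could extract Day-like terms, or at least the relevant derivative-level obstruction, in one of the two original varieties. Here the derivative is crucial: results from \cite{DKS12} presumably characterize congruence modularity (or a closely related property such as $n$-permutability or a nontrivial congruence identity) of a linear idempotent variety in terms of a property of its derivative, and the derivative of a join should relate to the derivatives of the factors.

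The third step is to assemble these pieces: show that the derivative of $\V_1 \vee \V_2$ is, in the relevant sense, built from the derivatives of $\V_1$ and $\V_2$, and that the congruence-modularity-detecting property at the derivative level is itself prime for this class. Combining this with the characterization from \cite{DKS12} would yield that one of $\V_1$, $\V_2$ must already possess the property, i.e.\ be congruence modular, completing the contrapositive.

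The hard part, I expect, will be the decomposition step for the equational theory of linear varieties --- controlling exactly which identities are forced in the join and showing that a linear proof of a Day-term identity cannot genuinely mix the two signatures in an essential way. This is precisely the ``technical result about the equational theory of linear varieties'' the authors flag as of independent interest, and getting the bookkeeping right (ensuring linearity and idempotency are preserved under the relevant substitutions, and that no collapse occurs when terms from the two varieties are composed) is where the real difficulty lies; the passage through the derivative is what makes this bookkeeping tractable, since the derivative strips away enough structure to make the primeness argument go through.
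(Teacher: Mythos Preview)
Your proposal is correct and matches the paper's approach: contraposition via the DKS derivative, using the technical lemma on linear equational theories to get $(\V_1 \vee \V_2)' = \V_1' \vee \V_2'$, and then primeness at the derivative level. The only pieces you leave implicit are that the DKS characterization reads ``$\V$ is congruence modular iff $\V'$ is trivial'' and that the needed primeness is simply the Garcia--Taylor fact that $\mathbf{1}$ is join-prime in $\LL$.
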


In \cite{DKS12}, the derivative is also used to give a characterization of those linear idempotent varieties which satisfy some nontrivial congruence identity. Using this characterization, here we are able to prove the following:

\begin{theorem}\label{thm:nontrivconjlinid}
 Let $\V_{1}$ and $\V_{2}$ be varieties axiomatized by linear idempotent identities. If $\V_{1} \vee \V_{2}$ satisfies a nontrivial congruence identity, then either $\V_{1}$ or $\V_{2}$ satisfies a nontrivial congruence identity.
\end{theorem}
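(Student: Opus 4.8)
The plan is to follow the proof of Theorem~\ref{thm:modconjlinid} step for step, substituting the modularity characterization of \cite{DKS12} by the companion characterization of linear idempotent varieties that satisfy a nontrivial congruence identity. First I would fix disjoint signatures for $\V_1$ and $\V_2$ and realize the join concretely: $\W := \V_1 \vee \V_2$ is then axiomatized by the union of the two sets of linear idempotent identities, so $\W$ is itself linear and idempotent. In particular all three of $\V_1$, $\V_2$, $\W$ fall within the scope of the derivative operation of \cite{DKS12}, and the congruence-identity criterion obtained there may be phrased as a property $\Pi$ of the (possibly iterated) derivative --- concretely, that some derivative $\W^{(k)}$ is interpretation-equivalent to the variety of sets. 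I would also record at the outset that the derivative of a linear idempotent variety is again linear idempotent, so that $\Pi$ can be tested along the whole chain $\W \geq \W' \geq \W'' \geq \cdots$.

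The structural fact that powers the argument is that the derivative commutes with the join on this class, i.e.\ that $\W' = \V_1' \vee \V_2'$ and hence, by induction, $\W^{(k)} = \V_1^{(k)} \vee \V_2^{(k)}$ for every $k$. This is exactly the point at which the technical result on the equational theory of linear varieties is invoked, and it is the same fact already driving Theorem~\ref{thm:modconjlinid}: since $\Sigma_1$ and $\Sigma_2$ are disjoint, a linear identity valid in $\W$ that mixes symbols from both signatures must already be a formal consequence of identities each living entirely in one signature, so there is no interaction between the two linear theories. Feeding this ``no cross-talk'' principle into the syntactic description of the derivative yields the claimed commutation, precisely as in the modular case.

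With the commutation in hand the theorem follows by contraposition. Suppose neither $\V_1$ nor $\V_2$ satisfies a nontrivial congruence identity. By the \cite{DKS12} criterion, no iterated derivative $\V_1^{(k)}$ or $\V_2^{(k)}$ is interpretation-equivalent to the variety of sets. Since the variety of sets is the bottom of the relevant part of $\LL$, a join is equivalent to it only when both joinands are; hence $\W^{(k)} = \V_1^{(k)} \vee \V_2^{(k)}$ is never trivial either, and so $\W$ fails the criterion and satisfies no nontrivial congruence identity. This is the contrapositive of the statement.

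I expect the main obstacle to lie not in the join-primality step itself --- which is just the observation that triviality of a derivative is join-prime at the bottom of $\LL$ --- but in confirming that the \cite{DKS12} congruence-identity criterion genuinely is a property of the derivative \emph{chain} of the above form, so that the commutation $\W^{(k)} = \V_1^{(k)} \vee \V_2^{(k)}$ closes the argument, together with checking that the derivative keeps us inside the linear idempotent class at every stage so that the no-cross-talk lemma keeps applying. Once those two points are secured, everything else is a transcription of the argument already used for congruence modularity, with the two \cite{DKS12} characterizations interchanged.
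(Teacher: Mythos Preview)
Your overall plan is exactly the paper's: invoke the \cite{DKS12} characterization (stated here as Theorem~\ref{thm:linidnontriv}) that a linear idempotent variety satisfies a nontrivial congruence identity iff some iterated derivative is trivial, feed in the commutation $(\V_1\vee\V_2)^{(n)}=\V_1^{(n)}\vee\V_2^{(n)}$ coming from Lemma~\ref{lem:weakindep-union} (Corollary~\ref{cor:derjoinn}), and finish with a join-primality argument. The two checkpoints you flag --- that the derivative stays linear idempotent and that the \cite{DKS12} criterion really is about the derivative chain --- are correct and are handled in the paper exactly as you anticipate.

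There is one genuine slip in your closing step. The variety witnessing ``$\Sigma^{(k)}$ is inconsistent'' is \emph{not} the variety of sets, and it does not sit at the bottom of $\LL$: it is the trivial (one-element) variety, which is the top element $\mathbf{1}$. Your justification (``a join equals the bottom only when both joinands do'') is a lattice triviality about bottoms and is not what is being used. What you actually need is that $\mathbf{1}$ is \emph{join-prime} in $\LL$ (Lemma~\ref{lem:oneisprime}, from \cite{GT84}): if $\V_1^{(k)}\vee\V_2^{(k)}=\mathbf{1}$ then already one of the $\V_i^{(k)}$ equals $\mathbf{1}$. That is a real theorem, not a formal consequence of being at an extremum. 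With this correction your argument is the paper's proof verbatim.
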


Ralph Freese \cite{Fr12} created an ordered version of the derivative introduced in \cite{DKS12} and used it to establish, for the property of being $n$-permutable for some $n$, some  results that are analogous to the ones established in \cite{DKS12} for modularity. Here we make use of the results in \cite{Fr12} to also establish the following result:

\begin{theorem}\label{thm:npermconjlinid}
 Let $\V_{1}$ and $\V_{2}$ be varieties axiomatized by linear idempotent identities. If $\V_{1} \vee \V_{2}$ is $n$-permutable for some $n$, then either $\V_{1}$ or $\V_{2}$ is $n$-permutable for some $n$.\end{theorem}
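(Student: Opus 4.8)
The plan is to follow the strategy used for Theorem~\ref{thm:modconjlinid}, replacing the derivative of \cite{DKS12} by the ordered derivative of \cite{Fr12}. First I would record that $\V_1\vee\V_2$ is again a linear idempotent variety. A concrete representative of the join in $\LL$ is obtained by taking the disjoint union of the two signatures together with the union of the two separate sets of defining identities, none of which mixes the signatures; since each such identity is linear and idempotent, so is the whole axiom set. Consequently Freese's characterization of being $n$-permutable for some $n$ applies to $\V_1\vee\V_2$ exactly as it applies to $\V_1$ and $\V_2$ individually.

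Next, recall that \cite{Fr12} characterizes, for a linear idempotent variety $\V$, the property of being $n$-permutable for some $n$ by a termination condition on the iterated ordered derivative $\partial^{k}\V$ — the exact analogue of the characterization of congruence modularity by the unordered derivative of \cite{DKS12} that underlies Theorem~\ref{thm:modconjlinid}. I would then argue directly: assume $\V_1\vee\V_2$ is $n$-permutable for some $n$, so that by Freese's criterion its ordered-derivative chain terminates, i.e.\ some $\partial^{k}(\V_1\vee\V_2)$ is a trivial variety. The goal is to show that this collapse can only occur because one of $\V_1$, $\V_2$ already has the property.

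The crux is to control how the ordered derivative behaves on the join, and here the technical result on the equational theory of linear varieties is decisive. Because the defining identities of $\V_1\vee\V_2$ never mix the two disjoint signatures, that result pins down which linear identities can hold in the join, showing that each is inherited, in a controlled way, from $\V_1$ or from $\V_2$. The termination of the ordered-derivative chain of $\V_1\vee\V_2$ is witnessed by such linear (idempotent) identities — the ones forcing the operations of $\partial^{k}(\V_1\vee\V_2)$ to be projections. Applying the separation result to these witnesses localizes the collapse to one of the two signatures, and hence shows that the ordered-derivative chain of the corresponding $\V_i$ already terminates; by Freese's criterion, that $\V_i$ is $n$-permutable for some $n$, which is exactly the disjunction we want.

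The main obstacle I anticipate is twofold. First, Freese's ordered derivative records order data absent from the unordered derivative of \cite{DKS12}, so one must verify that the separation-of-signatures analysis behind Theorem~\ref{thm:modconjlinid} survives in the ordered setting — that the technical equational-theory result still governs the ordered derivative of a join as it governs the unordered one. Second, a priori the component responsible for the collapse could vary from one level of the derivative chain to the next, so the genuine work is to show the localization is consistent along the entire chain, so that a single $\V_i$ inherits the full termination rather than different $\V_i$ at different stages. Once these compatibility points are settled, the argument is formally identical to the proof of Theorem~\ref{thm:modconjlinid}, and indeed the same technical lemma on equational theories of linear varieties services all three theorems of the paper.
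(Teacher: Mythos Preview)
Your overall strategy is exactly the paper's: use Lemma~\ref{lem:weakindep-union} to control the order derivative of the join, invoke Freese's criterion on both ends, and conclude. But your second ``obstacle'' is illusory, and seeing why sharpens the argument considerably.

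The clean formulation is this. Since the order derivative, like the unordered one, is defined entirely in terms of consequences of the form $F(\mathbf{x})\approx y$, Lemma~\ref{lem:weakindep-union} gives $(\Sigma_1\cup\Sigma_2)^{+}=\Sigma_1^{+}\cup\Sigma_2^{+}$ immediately (this handles your first obstacle), and by iteration $(\V_1\vee\V_2)^{+(k)}=\V_1^{+(k)}\vee\V_2^{+(k)}$ for every $k$. Now if $\V_1\vee\V_2$ is $n$-permutable for some $n$, Freese's criterion yields a \emph{single} $k$ with $(\V_1\vee\V_2)^{+(k)}$ trivial, i.e.\ $\V_1^{+(k)}\vee\V_2^{+(k)}=\mathbf{1}$. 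One application of Lemma~\ref{lem:oneisprime} (join-primeness of $\mathbf{1}$ in $\L$) at this fixed $k$ forces $\V_i^{+(k)}=\mathbf{1}$ for some $i$, and Freese's criterion in the other direction finishes.

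There is nothing to track ``along the chain'' and no possibility of the responsible factor alternating between levels: the collapse is detected at one level $k$, and join-primeness of $\mathbf{1}$ localizes it to one factor at that same level. Your write-up never invokes Lemma~\ref{lem:oneisprime}, but it is precisely this lemma that performs the localization you describe only informally, and it is what dissolves your second concern outright.
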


Definitions of the above notions will by given in Section \ref{sec:notation}, while the results using the derivative and order derivative are shown in Sections \ref{sec:derivative} and \ref{sec:orderderivative}, respectively.
Section \ref{sec:ident-lin-varieties}, which is essentially self-contained, gives the proof of a crucial property of linear varieties. This result might be of independent interest for the study of linearly
generated equational systems.

\section{Definitions and notations} \label{sec:notation}

Let $\Sigma$ be a set of identities. We say that $\Sigma$ is \emph{idempotent} if, for every function symbol $F$ appearing in $\Sigma$, it is the case that
$\Sigma \models F(x,\dots,x) \approx x$. We say that $\Sigma$ is \emph{linear} if each term appearing in $\Sigma$ has at most one function symbol. %
\footnote{Linear terms are also called \emph{depth 1} in \cite{Be06}, \cite{Be07} and \emph{simple} in \cite{Ta09}.}

We say that $\Sigma$ is \emph{inconsistent} if it can only be modeled in trivial varieties, i.~e., if  $\Sigma \models x \approx y$.

We'll say that a variety is (linear) idempotent whenever it is axiomatized by a (linear) idempotent set of identities.

\begin{defn}[\cite{DKS12}]
Let $\Sigma$ be an idempotent set of identities and let $F$ be a function symbol occurring in $\Sigma$.
\begin{itemize}
\item[(i)]  We say that $F$ is \emph{weakly independent} of its $i$-the place if
 $\Sigma \models x \approx F(\mathbf{w})$ for a variable $x$ and some sequence of not necessarily distinct variables $\mathbf{w}$, such that $w_{i} \ne x$.
\item[(ii)] We say that  $F$ is \emph{independent} of its $i$-th place if
 $\Sigma \models F(\mathbf{w}) \approx F(\mathbf{w'})$ where $\mathbf{w}$, $\mathbf{w'}$ are two sequences of distinct variables, that are the same except at position $i$.
\end{itemize}
\end{defn}

\begin{example}
 Let $\Sigma=\set{p(x,y,y)\approx x, p(y,y,x) \approx x}$, the set of identities describing a Maltsev term.
 Then by the first identity $p$ is weakly independent of its second and third positions; by the second identity, $p$ is also weakly independent of its first position.
\end{example}

\begin{defn}[\cite{DKS12}]\label{def:derivative}
Let $\Sigma$ be an idempotent set of identities. The \emph{derivative} $\Sigma'$ is defined by augmenting $\Sigma$ with the identities  asserting that $F$ is independent of its $i$-th place, for all $F$ and $i$ such that $F$ is weakly independent of its $i$-th place.
\end{defn}

Hence, the derivative $\Sigma'$ can be seen as the result of strengthening every occurrence of weak independence in $\Sigma$ to independence. When we have a variety $\V$ axiomatized by a set $\Sigma$ of idempotent identities, we will denote by $\V'$ the variety axiomatized by $\Sigma'$.
The derivative can be iterated in the obvious way. The $n$th derivative of $\Sigma$ (or $\V$) will be denoted by $\Sigma^{(n)}$ (or $\V^{(n)}$).

\begin{example}
 Let $\Sigma$ be the same as in Example 1. Then $\Sigma'$ will contain identities which state that $p$ is independent of all its places:
 $$\Sigma' = \Sigma \cup \set{p(u,y,z)\approx p(v,y,z),p(x,u,z) \approx p(x,v,z),p(x,y,u)\approx p(x,y,v)}$$
 It is easy to see that $\Sigma'$ is inconsistent:
 $$\Sigma' \models x \approx p(x,y,y) \approx p(y,y,y) \approx y$$
\end{example}

The notion of derivative was used in \cite{DKS12} to establish very nice results concerning congruence modularity, and the satisfaction of nontrivial congruence identities, which we will state and use in the next section.
Dent, Kearnes and Szendrei also suggested in \cite{DKS12} that alternative notions of ``derivative'' might be developed, which could
be applied in a similar fashion to other Maltsev properties, such as $n$-permutability. Ralph Freese (\cite{Fr12}) did just that, by defining the notion of \emph{order derivative}:

\begin{defn}[\cite{Fr12}]
  Let $\Sigma$ be an idempotent set of equations. The \emph{order derivative} of $\Sigma$, denoted by $\Sigma^{+}$, is the augmentation of $\Sigma$ by additional identities, in the following way: if $\Sigma \models x \approx F(\mathbf{w})$, for a tuple $\mathbf{w}$ of not necessarily distinct variables, and an operation symbol $F$ occurring in $\Sigma$, then $\Sigma^{+}$ will contain all identities of the form
  $$ x \approx F(\mathbf{w'}) $$
 where, for each $i$, $w_{i}'$ is either $x$ or $w_{i}$.
\end{defn}

\begin{example}
Again, let $\Sigma$ be as in Example 1. Then $\Sigma^{+}$ will include identities such as
$x \approx p(x,x,y)$, $x \approx p(x,y,x)$ and $x \approx p(y,x,x)$ (and also identities that state that $F$ is idempotent, but those add no information, as that already held in $\Sigma$).
Clearly, $\Sigma^{+}$ is inconsistent, for
$$\Sigma^{+} \models x \approx p(x,y,y) \approx y$$
\end{example}

When we have a variety $\V$ axiomatized by a set $\Sigma$ of identities, we will let $\V^{+}$ denote the variety axiomatized by $\Sigma^{+}$.
The $n$th order derivative of $\Sigma$ (or of $\V$) will be denoted by $\Sigma^{+(n)}$ (or $\V^{+(n)}$).

\section{Modularity and nontrivial congruence identities} \label{sec:derivative}

\begin{theorem}[\cite{DKS12}, Thm. 3.2]
A variety $\V$ is congruence modular if and only if $\V$ realizes some set $\Sigma$ of idempotent identities such that $\Sigma'$ is inconsistent.
\end{theorem}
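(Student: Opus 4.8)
The statement is a biconditional, and I would treat the two implications separately. First I would reduce the ``if'' direction to a statement about the idempotent variety presented by $\Sigma$ alone. Recall that the filter $\M$ of congruence modular varieties is an up-set of $\LL$: a variety that interprets a congruence modular variety is again congruence modular, since Day terms transfer along interpretations. Now ``$\V$ realizes $\Sigma$'' means exactly that the variety $\W$ axiomatized by $\Sigma$ is interpretable in $\V$, i.e.\ $\W \le \V$ in $\LL$. Hence for the ``if'' direction it is enough to show that $\W$ is congruence modular whenever $\Sigma$ is idempotent and $\Sigma'$ is inconsistent; modularity of $\V$ then follows by up-closure of $\M$. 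In particular, for the hard direction I may assume the ambient variety is idempotent.

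For the ``only if'' direction I would exhibit an explicit $\Sigma$. By Day's theorem a congruence modular $\V$ has quaternary terms $m_0,\dots,m_n$ with $m_0(x,y,z,u)\eq x$, $m_n(x,y,z,u)\eq u$, and $m_i(x,y,y,x)\eq x$ for all $i$, together with the connecting identities $m_i(x,x,u,u)\eq m_{i+1}(x,x,u,u)$ for even $i$ and $m_i(x,y,y,u)\eq m_{i+1}(x,y,y,u)$ for odd $i$. Since $m_i(x,y,y,x)\eq x$ forces idempotency, the set $\Sigma$ of these Day identities is idempotent and is realized by $\V$, and it remains only to verify that $\Sigma'$ is inconsistent. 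Here the computation is clean: the identity $m_i(x,y,y,x)\eq x$ exhibits each $m_i$ as weakly independent of its second and third places, so in $\Sigma'$ every $m_i$ becomes genuinely independent of places $2$ and $3$ and thus collapses to a binary operation $\hat m_i(x,u)$ in its outer arguments. The boundary identities give $\hat m_0(x,u)\eq x$ and $\hat m_n(x,u)\eq u$, while \emph{both} families of connecting identities degenerate to $\hat m_i(x,u)\eq \hat m_{i+1}(x,u)$ (the even/odd distinction evaporates once the inner places are immaterial). Chaining these yields $x\eq\hat m_0(x,u)\eq\cdots\eq\hat m_n(x,u)\eq u$, so $\Sigma'\models x\eq u$. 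This is the full-chain analogue of the Maltsev computation in Example~2.

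The ``if'' direction is where the real work lies. Here $\Sigma$ is idempotent and we are handed a finite formal derivation of $x\eq y$ from $\Sigma$ together with the independence identities adjoined in forming $\Sigma'$, and the task is to manufacture genuine Day (or Gumm) terms inside $\W$. The key observation is that although an independence identity $F(\mathbf{w})\eq F(\mathbf{w}')$ is not a consequence of $\Sigma$, the weak independence that licenses it supplies a term witness $x\eq F(\mathbf{w})$ with $w_i\ne x$, and such a witness behaves congruence-theoretically like a single ``shift'' of a congruence across the $i$-th coordinate of $F$ --- precisely the local move carried out by one link of a Day chain. I would therefore argue by induction on the derivation: every application of a genuine $\Sigma$-identity is absorbed for free, whereas every appeal to an independence identity is simulated by one shifting step built from its weak-independence witness, and the derivation of $x\eq y$ assembles these steps into a full shift relating the relevant congruences. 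Concretely one maintains the invariant that the two sides of each intermediate equation are joined by a chain of such definable shifts of controlled length, and reads off a system of Day terms at the end.

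The main obstacle is exactly this bookkeeping. A derivation may apply independence identities to deeply nested subterms, beneath function symbols of arbitrary arity, and in an order that interleaves the two parity classes of Day's connecting identities; converting such an unstructured proof into a uniform finite sequence $m_0,\dots,m_n$ meeting Day's boundary and alternation conditions --- and controlling how the number of terms grows along the derivation --- is the delicate point. I expect one first needs a normal form for derivations in $\Sigma'$, serializing the independence steps, before the shift-by-shift translation can be carried out cleanly.
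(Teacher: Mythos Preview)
The paper does not prove this theorem at all: it is quoted verbatim from \cite{DKS12} (their Theorem~3.2) and used as a black box. There is therefore no in-paper argument to compare your proposal against.

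On the merits of your proposal itself: the ``only if'' direction is correct and complete. Taking $\Sigma$ to be the Day identities, the identity $m_i(x,y,y,x)\eq x$ does witness weak independence of each $m_i$ in its second and third places, and your chaining argument in $\Sigma'$ is exactly right; this is indeed the full-chain analogue of Example~2.

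Your ``if'' direction, however, is not a proof but an outline of a strategy, and you say so yourself. The proposal to walk a derivation of $x\eq y$ in $\Sigma'$ and replace each appeal to an independence identity by a ``shift'' built from its weak-independence witness is plausible in spirit, but the substance of the theorem lies precisely in the bookkeeping you defer: controlling how shifts compose under nesting and substitution, keeping the chain length finite and uniform, and recovering the parity structure of Day's connecting identities from an arbitrary proof tree. You identify this as ``the delicate point'' and conjecture that a normal form for $\Sigma'$-derivations is needed first, but you do not supply one. As written, the ``if'' direction is a statement of intent rather than an argument; to complete it you would need either to carry out the induction in full or to appeal to the congruence-lattice machinery of \cite{DKS12}, where the forward implication is actually established by a direct argument about the shifting lemma and centralizer relations rather than by syntactic induction on derivations.
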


It is noted in \cite{DKS12} that, while the fact of $\Sigma'$ being inconsistent forces any variety realizing $\Sigma$ to be congruence modular, the converse is not true in general.
However, for \emph{linear} idempotent varieties, a stronger result could be obtained. The following theorem is a slight reformulation of Theorem 5.1 of \cite{DKS12}:

\begin{theorem}\label{thm:modiffderone}
 Let $\V$ be a linear idempotent variety. Then $\V$ is congruence modular if and only if $\V'$ is a trivial variety.
\end{theorem}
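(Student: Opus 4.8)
The plan is to deduce this from the characterization of congruence modularity quoted above from \cite{DKS12} (Thm.~3.2), using linearity to replace the abstract ``some $\Sigma$'' by the given axiomatization. Write $\V$ for the variety axiomatized by a linear idempotent set $\Sigma$, and recall that $\V'$ is trivial precisely when $\Sigma'$ is inconsistent. The backward implication is immediate: if $\V'$ is trivial then $\Sigma'$ is inconsistent, and since $\V$ realizes $\Sigma$ through its own basic operations, the cited theorem gives that $\V$ is congruence modular. All the content is in the forward direction.

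For the forward direction I would argue as follows. Assume $\V$ is congruence modular. By \cite{DKS12}, Thm.~3.2, there is an idempotent set $\Gamma$, realized by $\V$ through some interpretation $\tau$, with $\Gamma'$ inconsistent. The idea is to transport the inconsistency of $\Gamma'$ across $\tau$. Here $\tau$ sends each operation symbol $G$ of $\Gamma$ to a $\V$-term $t_G$ and, being a clone homomorphism, carries every consequence of $\Gamma$ to a theorem of $\V$. Now $\Gamma'$ is $\Gamma$ together with the independence identities attached to its weakly independent places, and it proves $x \approx y$; applying $\tau$ to that derivation, the $\Gamma$-identities become theorems of $\V$, hence of $\V'$ (since $\Sigma \subseteq \Sigma'$ forces $\mathrm{Th}(\V) \subseteq \mathrm{Th}(\V')$). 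Thus it suffices to show that the $\tau$-image of each independence identity of $\Gamma$ holds in $\V'$, for then the whole derivation collapses to $\V' \models x \approx y$. This reduces the theorem to a local claim: if $t$ is a $\V$-term with $\V \models x \approx t(\mathbf{w})$ where $w_{i} \neq x$ (so $t$ is weakly independent of its $i$-th place as a derived operation), then $\V'$ makes that place fully independent, i.e.\ $\V' \models t(\mathbf{w}') \approx t(\mathbf{w}'')$ for tuples agreeing off coordinate $i$.

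The heart of the matter, and the step I expect to be the main obstacle, is exactly this local claim, and it is here that linearity is indispensable. The derivative $\V'$ upgrades weak independence to independence only for the \emph{basic} symbols of $\Sigma$, whereas the claim demands the same passage for an arbitrary composite $t$. To bridge this gap I would invoke the structural result on linear equational theories established in Section~\ref{sec:ident-lin-varieties}: because $\Sigma$ is linear, the derivation witnessing $\V \models x \approx t(\mathbf{w})$ can be controlled tightly enough that the weak independence of $t$ at place $i$ is already accounted for, symbol by symbol, by weak independences of the basic operations occurring along the relevant path through $t$. Passing to $\Sigma'$ turns each of those basic weak independences into a genuine independence, and these compose to yield the full independence of $t$ required above. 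I expect the delicate points to be precisely the two that the self-contained linear lemma is designed to settle: making the notion of ``path through $t$'' precise, and ruling out any nonlinear interaction that could produce weak independence of $t$ not forced basic-symbol-by-basic-symbol by $\Sigma$ — the failure of such a guarantee being exactly why one derivative need not suffice without the linearity hypothesis.
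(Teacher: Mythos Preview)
The paper does not prove this theorem at all: it is stated as ``a slight reformulation of Theorem~5.1 of \cite{DKS12}'' and simply quoted. So there is no proof in the paper for your attempt to be compared with; you are, in effect, trying to reprove a result that the authors import wholesale.

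On the substance of your attempt: the backward direction is fine and is exactly the easy half. For the forward direction, your reduction to the ``local claim'' (weak independence of a composite $\V$-term at place $i$ becomes full independence in $\V'$) is a correct reduction, but that claim \emph{is} the whole theorem, and you do not prove it. Your appeal to Section~\ref{sec:ident-lin-varieties} is misplaced: the technical work there establishes Lemma~\ref{lem:weakindep-union}, which concerns the union $\Sigma_1\cup\Sigma_2$ of two linear idempotent sets over \emph{disjoint} signatures and shows that an identity $F(\mathbf{x})\approx y$ with $F$ from $\Sigma_i$ already follows from $\Sigma_i$ alone. That is a statement about separating two languages, not about decomposing weak independence of a composite term in a single linear variety into weak independences of its basic symbols. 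The machinery of Section~\ref{sec:ident-lin-varieties} (the quasi-order $\succeq$, tracking occurrences through a derivation) is suggestive, but nothing proved there delivers the path-by-path control you describe. To carry out your plan you would have to reproduce the argument of \cite{DKS12}, Theorem~5.1, which is independent of the contents of this paper.
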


Throughout this and the following section, $\Sigma_{1}$ and $\Sigma_{2}$ will always denote idempotent sets of identities, taken with disjoint sets of function symbols.

 If $\V_{1}$, $\V_{2}$ are the varieties axiomatized by $\Sigma_{1}$, $\Sigma_{2}$, respectively, then it is well known (see \cite{GT84}) that
 $\V_{1} \vee \V_{2}$ is exactly the variety axiomatized by $\Sigma_{1} \cup \Sigma_{2}$.

 Furthermore, it is clear that $(\Sigma_{1} \cup \Sigma_{2})' \supseteq \Sigma_{1}' \cup \Sigma_{2}'$.
 Corollary \ref{cor:deriv-union} below shows that in fact equality holds when $\Sigma_{1}$ and $\Sigma_{2}$ are \emph{linear}. This result will come out as a consequence of Lemma~\ref{lem:weakindep-union}.
This lemma provides the leverage we need to establish all the results stated in the introduction.
While the result stated in the lemma may seem intuitively obvious, its proof involves arguments about term manipulations that are quite technical in nature, and is presented in Section \ref{sec:ident-lin-varieties}.

\begin{lemma}\label{lem:weakindep-union}
  Let $\Sigma_{1}$ and $\Sigma_{2}$ be consistent sets of linear idempotent identities. Let $F$ be an operation symbol occurring in
  $\Sigma_{i}$, where $i \in \set{1,2}$. Consider a linear equation of the form
  $F(\mathbf{x})\approx y$ over a variable set $X$. If $\Sigma_{1} \cup \Sigma_{2} \models F(\mathbf{x})\approx y $, then $\Sigma_{i} \models F(\mathbf{x})\approx y $.
\end{lemma}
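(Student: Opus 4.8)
The plan is to argue syntactically, via Birkhoff's completeness theorem for equational logic. Assume without loss of generality that $F$ occurs in $\Sigma_1$. Since $\Sigma_1 \cup \Sigma_2 \models F(\mathbf{x}) \approx y$, there is a finite derivation, that is, a chain of terms
$$ F(\mathbf{x}) = p_0, p_1, \ldots, p_n = y, $$
in which each $p_{k+1}$ is obtained from $p_k$ by replacing, at some position, a subterm of the form $\theta\ell$ by $\theta r$, where $\ell \approx r$ is an instance (in either direction) of an identity of $\Sigma_1 \cup \Sigma_2$ and $\theta$ is a substitution. The first thing I would record is the structural restriction imposed by \emph{linearity}: since every identity of $\Sigma_i$ has depth at most one, each side $\ell$ is either a variable or has the shape $H(v_1,\ldots,v_m)$ with the $v_j$ variables. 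Hence whenever a step contracts a function-headed redex, the head of that redex is precisely the symbol $H$ of the identity used, which belongs to the signature of $\Sigma_i$; disjointness of the two signatures then makes the owning theory of each such step unambiguous, and lets me attach to every node of every $p_k$ a well-defined \emph{colour} in $\{1,2\}$.

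The heart of the argument is a \emph{purification} of this derivation: I want to eliminate every step that uses an identity of $\Sigma_2$, keeping only $\Sigma_1$-steps, so that the surviving chain witnesses $\Sigma_1 \models F(\mathbf{x}) \approx y$. I would proceed by induction on the number of occurrences of $\Sigma_2$-operation symbols in the derivation, removing one outermost occurrence at a time, by following the life of a single maximal $\Sigma_2$-headed subterm $t = G(s_1,\ldots,s_m)$ through the chain. Such a $t$ is created by some step, possibly modified internally, and must eventually vanish, since the final term $y$ contains no $\Sigma_2$-symbol. The only two ways it can vanish are that it is discarded by a surrounding $\Sigma_1$-projection $F(\cdots)\approx x_j$, or that it is collapsed to a variable by a top-level identity $G(\cdots)\approx z$ which, $G$ being a $\Sigma_2$-symbol, must come from $\Sigma_2$ alone. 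In the first case the $\Sigma_2$-material never contributed and its construction can simply be suppressed; in the second case the plan is to show that this collapse is already forced at the moment $t$ is created, so that the whole excursion through $t$ can be replaced by a shorter, $\Sigma_2$-free detour.

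Making the last sentence precise is where the real work, and the main obstacle, lies. I expect to need a \emph{combination lemma} describing the free algebra $\mathbf{F}_{\Sigma_1 \cup \Sigma_2}(X)$ as a kind of free product of $\V_1$ and $\V_2$, together with a normal-form statement: every element has an ``alternating'' representative in which layers of $\Sigma_1$- and $\Sigma_2$-term operations strictly alternate and no layer collapses within its own theory, and two such representatives coincide only when forced layer by layer. The upshot I am after is that the presence of $\Sigma_2$ creates \emph{no new} collapse of a pure $\Sigma_1$-term to a variable. The \emph{consistency} of each $\Sigma_i$ enters exactly here, to forbid the degenerate collapses that would otherwise destroy the separation (for an inconsistent $\Sigma_2$ every subterm collapses and the whole scheme fails). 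Linearity is again indispensable, since it confines each contracting step to a single colour and thereby restricts all interaction between the theories to the boundaries between alternating layers, where it can be analysed combinatorially. Once this normal-form and collapse-control lemma is established, the purification induction goes through and produces the desired $\Sigma_1$-only derivation, completing the proof.
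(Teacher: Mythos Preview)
Your proposal correctly identifies the syntactic/rewriting framework and the roles of linearity and signature disjointness, but it is a plan rather than a proof: the decisive step is explicitly deferred to an unproved ``combination lemma'' giving alternating normal forms in $\mathbf{F}_{\Sigma_1\cup\Sigma_2}(X)$. You yourself flag this as ``where the real work lies,'' and indeed such a normal-form theorem for joins of linear idempotent varieties is not standard and is of essentially the same difficulty as the lemma itself. Without it your purification induction does not close. In the ``collapse'' case you need that if a maximal $\Sigma_2$-headed subterm $G(s_1,\ldots,s_m)$ eventually reduces to a variable, then this collapse was already forced at creation time; but between creation and collapse the arguments $s_j$ may have been rewritten by interleaved $\Sigma_1$-steps, and the intermediate terms may have had arbitrarily deep alternating $\Sigma_1/\Sigma_2$ structure, so what you need is precisely a nested instance of the statement you are proving. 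Your two-case dichotomy for how such a subterm disappears is also incomplete: besides projection and collapse, a linear identity such as $H(v,w)\approx H'(v,v)$ applied above it can duplicate it, and an identity $G(\mathbf{v})\approx G'(\mathbf{w})$ can change its head to another $\Sigma_2$-symbol without producing a variable.

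The paper's route is different in spirit. Rather than excising $\Sigma_2$-material from the derivation, it tracks the \emph{root occurrence} of $F(\mathbf{x})$ forward through the derivation via a quasi-order $\succeq$ on occurrences, defined by a finite case analysis of how a single occurrence moves under one linear rewrite. A variable-renaming argument (replacing every variable under a $\succeq$-successor of $t_0$ by a fresh $z$ and invoking consistency) shows that the root of $t_0$ is $\succeq$-above the root of $t_n=y$. Along any $\succeq'$-chain the head symbol can only change via an identity whose left-hand side is headed by that very symbol; since the chain starts at $F$, every head along it lies in the $\Sigma_1$-signature until the final variable. One then reads off a $\Sigma_1$-only derivation of $F(\mathbf{x})\approx y$ by collapsing each term in the chain to depth one, replacing its immediate arguments by variables chosen compatibly with $\Sigma$-equivalence. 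This avoids any global structure theorem for the free algebra: all the work is local bookkeeping at a single occurrence, which is exactly what linearity makes tractable.
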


\begin{corollary}\label{cor:deriv-union}
 If $\Sigma_{1}$ and $\Sigma_{2}$ are sets of linear idempotent identities, then
 $$(\Sigma_{1} \cup \Sigma_{2})'=\Sigma_{1}' \cup \Sigma_{2}'$$
\end{corollary}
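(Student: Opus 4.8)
The plan is to derive Corollary \ref{cor:deriv-union} directly from Lemma \ref{lem:weakindep-union}, reducing the claimed set equality to a statement about weak independence and then invoking the lemma. First I would recall that the inclusion $(\Sigma_{1} \cup \Sigma_{2})' \supseteq \Sigma_{1}' \cup \Sigma_{2}'$ already holds for any idempotent sets of identities, as noted in the text, so only the reverse inclusion needs proof. By Definition \ref{def:derivative}, the derivative $(\Sigma_{1}\cup\Sigma_{2})'$ augments $\Sigma_{1}\cup\Sigma_{2}$ precisely by the independence identities for each operation symbol $F$ and each place $i$ such that $F$ is weakly independent of its $i$-th place \emph{with respect to} $\Sigma_{1}\cup\Sigma_{2}$. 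So the content to verify is: every such occurrence of weak independence relative to the union is already an occurrence of weak independence relative to whichever $\Sigma_{j}$ contains $F$.

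Concretely, suppose $F$ occurs in $\Sigma_{i}$ (the function symbols of $\Sigma_{1}$ and $\Sigma_{2}$ being disjoint, so $i$ is determined by $F$), and suppose $F$ is weakly independent of its $k$-th place with respect to $\Sigma_{1}\cup\Sigma_{2}$. By the definition of weak independence, this means $\Sigma_{1}\cup\Sigma_{2}\models x \approx F(\mathbf{w})$ for some variable $x$ and some tuple of variables $\mathbf{w}$ with $w_{k}\neq x$. The equation $F(\mathbf{w})\approx x$ is a linear equation of exactly the form treated in Lemma \ref{lem:weakindep-union} (its two sides are $F$ applied to variables, and a single variable). Hence the lemma applies and yields $\Sigma_{i}\models x\approx F(\mathbf{w})$, which is precisely the assertion that $F$ is weakly independent of its $k$-th place with respect to $\Sigma_{i}$ alone. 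Consequently the corresponding independence identity, which $(\Sigma_{1}\cup\Sigma_{2})'$ adds on account of this weak independence, is already present in $\Sigma_{i}'$, hence in $\Sigma_{1}'\cup\Sigma_{2}'$.

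Running this argument over all operation symbols $F$ and all places $k$ shows that every generating identity of $(\Sigma_{1}\cup\Sigma_{2})'$ beyond those of $\Sigma_{1}\cup\Sigma_{2}$ itself is accounted for by $\Sigma_{1}'\cup\Sigma_{2}'$, giving the reverse inclusion and hence equality. I should also address the hypothesis that the lemma requires $\Sigma_{1}$ and $\Sigma_{2}$ to be \emph{consistent}; in the inconsistent case one of the $\Sigma_{j}$ already proves $x\approx y$, so both the union and its derivative are inconsistent and the equality holds trivially, letting me dispose of that case separately before applying the lemma.

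I do not expect any serious obstacle here, since all the genuine difficulty has been isolated into Lemma \ref{lem:weakindep-union}: the real work is the term-manipulation argument deferred to Section \ref{sec:ident-lin-varieties}. The only points demanding care are bookkeeping ones, namely that the disjointness of function symbols guarantees $F$ belongs to exactly one $\Sigma_{i}$ so that the lemma's conclusion lands in the right set, and that the independence identities added to form a derivative are determined solely by which weak-independence facts hold, so that matching the weak-independence facts suffices to match the derivatives.
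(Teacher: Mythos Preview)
Your proposal is correct and is exactly the argument the paper has in mind: the paper's proof is the single line ``Immediate from the previous lemma,'' and what you have written is a careful unpacking of that immediacy, including the disposal of the inconsistent case needed to meet the lemma's hypotheses.
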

\begin{proof}
Immediate from the previous lemma.
\end{proof}

\begin{corollary}\label{cor:derjoin}
 Let $\V_{1}$, $\V_{2}$ be linear idempotent varieties. Then $(\V_{1} \vee \V_{2})' = \V_{1}' \vee \V_{2}'$.
\end{corollary}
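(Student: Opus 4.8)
The plan is to reduce everything to the purely syntactic statement already recorded in Corollary~\ref{cor:deriv-union}, and then translate it into the language of varieties through the standard dictionary between varieties and their axiomatizations. First I would fix linear idempotent axiomatizations: since $\V_1$ and $\V_2$ are linear idempotent varieties, choose sets $\Sigma_1$, $\Sigma_2$ of linear idempotent identities, over disjoint sets of function symbols (consistent with the proviso in force throughout this section), such that $\V_i$ is axiomatized by $\Sigma_i$. By the definition of the derivative of a variety, $\V_i'$ is then the variety axiomatized by $\Sigma_i'$.

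Next I would assemble the join. By the description of joins recalled just above, $\V_1 \vee \V_2$ is axiomatized by $\Sigma_1 \cup \Sigma_2$. Since $\Sigma_1 \cup \Sigma_2$ is again a set of linear idempotent identities — the union of two such sets over disjoint signatures — its derivative is well defined, and $(\V_1 \vee \V_2)'$ is the variety axiomatized by $(\Sigma_1 \cup \Sigma_2)'$. Applying Corollary~\ref{cor:deriv-union} now gives $(\Sigma_1 \cup \Sigma_2)' = \Sigma_1' \cup \Sigma_2'$, so $(\V_1 \vee \V_2)'$ is axiomatized by $\Sigma_1' \cup \Sigma_2'$.

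Finally I would read off the right-hand side. Forming the derivative introduces no new function symbols, so $\Sigma_1'$ and $\Sigma_2'$ remain sets of identities over disjoint signatures; invoking the join-is-the-variety-of-the-union principle once more, $\Sigma_1' \cup \Sigma_2'$ axiomatizes $\V_1' \vee \V_2'$. Chaining this with the previous step yields $(\V_1 \vee \V_2)' = \V_1' \vee \V_2'$, as claimed.

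The substantive work has already been carried out in Lemma~\ref{lem:weakindep-union} and its Corollary~\ref{cor:deriv-union}; at the level of this corollary there is no genuine obstacle, only careful bookkeeping of which set axiomatizes which variety. The one point that merits a moment's attention is the closing step: one must verify that the join-equals-union description legitimately applies to the \emph{derived} sets $\Sigma_1'$, $\Sigma_2'$, which comes down to the observation that passing to the derivative leaves the two signatures disjoint, so that $\Sigma_1' \cup \Sigma_2'$ genuinely presents the join rather than some uncontrolled amalgam.
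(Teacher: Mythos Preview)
Your argument is correct and is exactly the bookkeeping the paper has in mind: the paper states this corollary without proof, treating it as immediate from Corollary~\ref{cor:deriv-union} together with the facts that $\V_1 \vee \V_2$ is axiomatized by $\Sigma_1 \cup \Sigma_2$ and that the derivative of a variety is defined via the derivative of an axiomatization. Your write-up simply makes those translation steps explicit, including the observation that the derived signatures remain disjoint.
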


\begin{corollary}\label{cor:derjoinn}
 Let $\V_{1}$, $\V_{2}$ be linear idempotent varieties. Then for every $n \in \N$, $(\V_{1} \vee \V_{2})^{(n)} = \V_{1}^{(n)} \vee \V_{2}^{(n)}$.
\end{corollary}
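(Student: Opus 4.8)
The plan is to induct on $n$, with Corollary~\ref{cor:derjoin} serving as the single-step engine. The base cases are immediate: for $n=0$ we have $(\V_1\vee\V_2)^{(0)}=\V_1\vee\V_2=\V_1^{(0)}\vee\V_2^{(0)}$ by the convention that the $0$th derivative is the variety itself, and the case $n=1$ is exactly the statement of Corollary~\ref{cor:derjoin}. So everything reduces to showing that the one-step identity can be fed back into itself.

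Before running the induction I would isolate the single observation that makes this possible, namely that the class of linear idempotent varieties is closed under taking derivatives. Fix linear idempotent axiomatizations $\Sigma_1$ of $\V_1$ and $\Sigma_2$ of $\V_2$ over disjoint sets of function symbols. By Definition~\ref{def:derivative}, $\Sigma'$ is obtained from $\Sigma$ by adjoining identities of the form $F(\mathbf{w})\approx F(\mathbf{w'})$, where $\mathbf{w}$ and $\mathbf{w'}$ are sequences of distinct variables agreeing off a single coordinate. Each such identity has exactly one occurrence of a function symbol on each side and introduces no new function symbol, so $\Sigma'$ is again linear, is still idempotent (the idempotence identities of $\Sigma$ survive in $\Sigma'$), and uses precisely the function symbols of $\Sigma$. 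Iterating, $\Sigma_i^{(n)}$ is a set of linear idempotent identities whose function symbols are those of $\Sigma_i$; in particular $\Sigma_1^{(n)}$ and $\Sigma_2^{(n)}$ remain over disjoint signatures, so $\V_1^{(n)}$ and $\V_2^{(n)}$ are linear idempotent varieties of the kind to which Corollary~\ref{cor:derjoin} applies.

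With this in hand the inductive step is routine. Assuming $(\V_1\vee\V_2)^{(n)}=\V_1^{(n)}\vee\V_2^{(n)}$, I would compute
\[
(\V_1\vee\V_2)^{(n+1)} = \bigl((\V_1\vee\V_2)^{(n)}\bigr)' = \bigl(\V_1^{(n)}\vee\V_2^{(n)}\bigr)',
\]
the first equality being the definition of the iterated derivative and the second the induction hypothesis. Since $\V_1^{(n)}$ and $\V_2^{(n)}$ are linear idempotent over disjoint signatures, Corollary~\ref{cor:derjoin} yields $\bigl(\V_1^{(n)}\vee\V_2^{(n)}\bigr)'=(\V_1^{(n)})'\vee(\V_2^{(n)})'=\V_1^{(n+1)}\vee\V_2^{(n+1)}$, which closes the induction. (Equivalently, one may run the whole argument at the level of the axiomatizing sets via Corollary~\ref{cor:deriv-union}, proving $(\Sigma_1\cup\Sigma_2)^{(n)}=\Sigma_1^{(n)}\cup\Sigma_2^{(n)}$; this makes the bookkeeping of signatures fully explicit and then transfers to varieties since these sets axiomatize the varieties in question.)

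The only point that requires genuine care — and hence the closest the argument comes to an obstacle — is the closure observation of the second paragraph: Corollary~\ref{cor:derjoin} is a one-step statement, and it can be applied repeatedly only if each derivative stays inside its hypothesis class (linear, idempotent, disjoint function symbols). Once that preservation is checked, no further appeal to the technical Lemma~\ref{lem:weakindep-union} is needed at the higher levels, since all the work of separating weak independences across the union has already been absorbed into the $n=1$ case.
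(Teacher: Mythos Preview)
Your argument is correct and is exactly the routine induction the paper has in mind; the paper states Corollary~\ref{cor:derjoinn} without proof, treating it as an immediate iteration of Corollary~\ref{cor:derjoin}. Your explicit verification that taking the derivative preserves linearity, idempotence, and disjointness of signatures is precisely the closure check that licenses this iteration, so there is nothing to add.
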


We also need the following fact from \cite{GT84}:

\begin{lemma}[\cite{GT84}]\label{lem:oneisprime}
 $\mathbf{1}$ is join-prime in $\L$.
\end{lemma}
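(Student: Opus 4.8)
The plan is to prove the contrapositive and to reduce join-primeness of $\mathbf{1}$ to a statement about consistency of equational theories over disjoint signatures. Recall that in $\L$ the top element $\mathbf{1}$ is the interpretability type of the trivial (one-element) variety, and that $\mathbf{1} \le \V$ holds exactly when $\V$ is trivial; since $\mathbf{1}$ is the maximum, $\mathbf{1} \le \V_1 \vee \V_2$ is the same as $\V_1 \vee \V_2 = \mathbf{1}$. Thus $\mathbf{1}$ being join-prime is equivalent to the assertion: if $\V_1 \vee \V_2$ is trivial, then $\V_1$ or $\V_2$ is trivial. Writing $\V_i$ for the variety of $\Sigma_i$ (with the two signatures disjoint), and using that $\V_1 \vee \V_2$ is axiomatized by $\Sigma_1 \cup \Sigma_2$, this amounts to showing that if $\Sigma_1$ and $\Sigma_2$ are each consistent then $\Sigma_1 \cup \Sigma_2$ is consistent.

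The key observation is that, because the signatures are disjoint, a model of $\Sigma_1 \cup \Sigma_2$ is simply a single set carrying two entirely independent structures: its $\Sigma_1$-reduct satisfies $\Sigma_1$ and its $\Sigma_2$-reduct satisfies $\Sigma_2$, and no identity mixes the two signatures. So it suffices to exhibit one set of size at least $2$ that simultaneously supports a $\V_1$-structure and a $\V_2$-structure. To produce it, I would fix an infinite cardinal $\kappa$ at least as large as each of the two signatures and form the free algebras $F_{\V_1}(\kappa)$ and $F_{\V_2}(\kappa)$. As $\V_1$ and $\V_2$ are nontrivial, their generators remain distinct, so both free algebras are nontrivial; and since $\kappa$ dominates the signatures, each has cardinality exactly $\kappa$.

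I would then choose a bijection between the underlying sets of $F_{\V_1}(\kappa)$ and $F_{\V_2}(\kappa)$ and transport the $\Sigma_2$-operations along it onto the underlying set of $F_{\V_1}(\kappa)$. That set now carries a $\Sigma_1$-structure modelling $\Sigma_1$ together with a (transported) $\Sigma_2$-structure modelling $\Sigma_2$; by the independence noted above it is a model of $\Sigma_1 \cup \Sigma_2$ with $\kappa \ge 2$ elements, hence nontrivial. This witnesses the consistency of $\Sigma_1 \cup \Sigma_2$, and the contrapositive delivers the join-primeness of $\mathbf{1}$.

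The step I expect to demand the most care is the cardinality bookkeeping used to line the two free algebras up along a bijection: one must know that a nontrivial variety contains algebras of every sufficiently large infinite cardinality, and that $F_{\V_i}(\kappa)$ has cardinality exactly $\kappa$ once $\kappa \ge \max(\aleph_0, |\Omega_i|)$, where $\Omega_i$ is the signature of $\Sigma_i$. Everything else is purely formal and rests solely on the disjointness of the two signatures, which is what decouples the two systems of identities.
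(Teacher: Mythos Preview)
The paper does not actually prove this lemma: it is stated with a citation to \cite{GT84} and used as a black box. So there is no ``paper's proof'' to compare against.

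Your argument is correct and is essentially the standard one. The reduction to ``$\Sigma_1$ and $\Sigma_2$ consistent over disjoint signatures $\Rightarrow$ $\Sigma_1\cup\Sigma_2$ consistent'' is exactly right, and producing a single carrier set that supports both structures is the whole content. The free-algebra construction works, and your cardinality bookkeeping is fine: for infinite $\kappa \ge |\Omega_i|$ one has $|F_{\V_i}(\kappa)|=\kappa$ whenever $\V_i$ is nontrivial. If you want to avoid that bookkeeping entirely, a slightly lighter variant is to pick any nontrivial $A_i\in\V_i$ and an infinite $\kappa$ with $2\le|A_i|\le 2^{\kappa}$; then $|A_1^{\kappa}|=2^{\kappa}=|A_2^{\kappa}|$, so a bijection lets you superimpose the two (product) structures on one set of size $2^{\kappa}$. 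Either way the conclusion follows.
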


Now we have all the tools we need to prove our main theorem.

\begin{proof}[Proof of Theorem \ref{thm:modconjlinid}]
Let $\V_{1}$, $\V_{2}$ be linear idempotent varieties.

Suppose that $\V_{1}$ and $\V_{2}$ are not congruence modular.
By Theorem \ref{thm:modiffderone}, $\V_{1}'$ and $\V_{2}'$ are not trivial, i.e., they are different from $\mathbf{1}$ in $\L$.
Let $\V = \V_{1} \vee \V_{2}$.  By Corollary \ref{cor:derjoin}, $\V'=\V_{1}' \vee \V_{2}'$, and Lemma \ref{lem:oneisprime} guarantees that
$\V'$ is not trivial. Since $\V$ is a linear idempotent variety, Theorem \ref{thm:modiffderone} gives us that $\V$ is not congruence modular.
\end{proof}

Theorem~\ref{thm:modiffderone} shows that, for linear idempotent varieties, congruence modularity is equivalent to the derivative being inconsistent.
The following result relates the satisfaction of some nontrivial congruence identity  to the inconsistency of some iteration of the derivative.

\begin{theorem}[\cite{DKS12}]\label{thm:linidnontriv}
 Let $\V$ be a linear idempotent variety. Then $\V$ satisfies some nontrivial congruence identity if and only if for some $n$, $\V^{(n)}$ is trivial.
\end{theorem}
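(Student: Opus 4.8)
The plan is to prove Theorem \ref{thm:linidnontriv} by citing it as established in \cite{DKS12}; since the excerpt attributes the result to that paper, my task is to reconstruct the argument establishing the stated equivalence for linear idempotent varieties. I would begin by recalling the general theory from \cite{DKS12} connecting the satisfaction of a nontrivial congruence identity to the existence of a chain of idempotent Maltsev conditions (weak difference terms, or the hierarchy of conditions whose conjunction characterizes the presence of some nontrivial congruence identity in the sense of the congruence identities studied there). The key conceptual device is that the derivative operation $\Sigma \mapsto \Sigma'$ strengthens weak independence to independence, and iterating it produces an increasing sequence of theories $\Sigma \subseteq \Sigma' \subseteq \Sigma'' \subseteq \cdots$; the question becomes whether this chain eventually collapses to inconsistency.

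For the forward direction, I would assume $\V$ satisfies some nontrivial congruence identity and show that some iterate $\V^{(n)}$ is trivial. The strategy here relies on the characterization (from \cite{DKS12}, their Theorem 5.1 being the $n=1$ modularity case, namely our Theorem \ref{thm:modiffderone}) that for linear idempotent varieties each application of the derivative captures exactly one ``level'' of the relevant congruence-theoretic strength. I would argue that satisfaction of a nontrivial congruence identity corresponds, via the commutator-theoretic results of \cite{DKS12}, to $\V$ lying at a finite level of a hierarchy; the derivative advances one level at each step, so after finitely many steps the hierarchy terminates, which at the equational level means $\Sigma^{(n)}$ forces $x \approx y$, i.e. $\V^{(n)}$ is trivial. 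The linearity hypothesis is what makes the derivative faithfully track this hierarchy, since (as Lemma \ref{lem:weakindep-union} and its corollaries show) weak independence for linear theories is detectable and well-behaved, so no information is lost or spuriously introduced when passing to $\Sigma'$.

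For the converse direction, I would assume $\V^{(n)}$ is trivial for some $n$ and deduce that $\V$ satisfies a nontrivial congruence identity. This direction should follow from the general (non-linear) implication already present in \cite{DKS12}: if a variety realizes an idempotent $\Sigma$ whose $n$th derivative is inconsistent, then $\V$ satisfies a congruence identity of a specific nontrivial form depending on $n$ (the $n=1$ case being congruence modularity). The derivative is constructed precisely so that inconsistency of $\Sigma^{(n)}$ yields explicit derived operations witnessing the congruence identity; I would extract these witnesses by unwinding the finite derivation of $x \approx y$ in $\Sigma^{(n)}$ back through the independence axioms added at each stage, translating each added independence into the corresponding term condition on congruences.

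The main obstacle, and the heart of the argument, is the forward direction: showing that satisfaction of \emph{some} nontrivial congruence identity, which a priori is a purely congruence-lattice condition with no immediate equational content, actually forces \emph{finite} termination of the derivative chain for a linear idempotent variety. This requires the deep structural input from \cite{DKS12} that, in the linear idempotent setting, these congruence identities are equivalent to idempotent linear Maltsev conditions of bounded complexity, so that the semantic condition can be reflected back into the syntactic derivative hierarchy. I expect this reflection step, rather than either of the direct term manipulations, to be where the real work lies.
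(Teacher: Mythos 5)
The paper contains no proof of this statement: Theorem \ref{thm:linidnontriv} is imported verbatim from \cite{DKS12} and used as a black box, so your opening plan --- cite it as established there --- coincides exactly with what the paper does, and had you stopped after the first sentence there would be nothing to object to. Note also that Lemma \ref{lem:weakindep-union}, which you invoke as making the derivative ``well-behaved,'' plays no role in this theorem at all: it concerns the join of \emph{two} linear theories with disjoint signatures, and is used only to distribute the derivative over joins (Corollaries \ref{cor:deriv-union}--\ref{cor:derjoinn}), not to analyze the derivative chain of a single variety.

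The attempted reconstruction, however, has a genuine gap in the forward direction, which you correctly identify as the hard one but then resolve with an invented mechanism. You posit a hierarchy of congruence-theoretic ``levels'' such that satisfying a nontrivial congruence identity places $\V$ at a finite level and each application of the derivative ``advances one level,'' so that $\Sigma^{(n)}$ becomes inconsistent when the hierarchy terminates; and you assert that the relevant congruence identities are ``equivalent to idempotent linear Maltsev conditions of bounded complexity.'' No such level-counting correspondence is established in \cite{DKS12} or anywhere else, and this assertion is precisely the unproved step on which your sketch rests. The actual argument in \cite{DKS12} runs by contraposition and is structurally quite different: if every iterate $\Sigma^{(n)}$ of a consistent linear idempotent $\Sigma$ is consistent, then (since for each operation symbol only finitely many independence identities can ever be added, together with a compactness argument) the union $\bigcup_{n}\Sigma^{(n)}$ is a consistent linear idempotent theory equal to its own derivative, and such a fixed-point theory is shown to be realized in the variety of semilattices; since, by the Kearnes--Kiss characterization, satisfying a nontrivial congruence identity is equivalent to satisfying an idempotent Maltsev condition that \emph{fails} in semilattices, and such conditions define filters in the interpretability lattice, a variety interpretable in semilattices can satisfy no nontrivial congruence identity. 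Nothing in your sketch produces this fixed-point-plus-semilattice argument or any substitute for it, so as written the forward direction would not go through. Your description of the converse direction (unwinding the inconsistency of $\Sigma^{(n)}$ into witnesses for a congruence identity) is at least the right general-case half of the DKS theory, but you cite it rather than prove it, so it too adds nothing beyond the citation you began with.
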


\begin{proof}[Proof of Theorem~\ref{thm:nontrivconjlinid}]
 Let $\V_{1}$, $\V_{2}$ be linear idempotent varieties, and suppose that $\V_{1} \vee \V_{2}$ satisfies some nontrivial congruence identity.
 By Theorem~\ref{thm:linidnontriv}, there is some natural number $n$ such that $(\V_{1} \vee \V_{2})^{(n)}$ is trivial.
 Hence, by Corollary~\ref{cor:derjoinn} and Lemma~\ref{lem:oneisprime}, either $\V_{1}^{(n)}$ or $\V_{1}^{(n)}$ is trivial. Again, by Theorem~\ref{thm:linidnontriv}, we have that either $\V_{1}$ or $\V_{2}$ satisfies a nontrivial congruence identity.
\end{proof}
\section{$n$-permutability}\label{sec:orderderivative}

In this section, we take care of the proof of Theorem~\ref{thm:npermconjlinid}, which will follow along the very same lines as those of
Theorems~\ref{thm:modconjlinid} and \ref{thm:nontrivconjlinid}. The first result is an analog, for the order derivative, of Corollary~\ref{cor:deriv-union}:

\begin{lemma}
  If $\Sigma_{1}$ and $\Sigma_{2}$ are sets of linear idempotent identities, then
 $$(\Sigma_{1} \cup \Sigma_{2})^{+}=\Sigma_{1}^{+} \cup \Sigma_{2}^{+}$$
\end{lemma}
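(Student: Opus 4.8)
The plan is to mirror, almost verbatim, the passage from Lemma~\ref{lem:weakindep-union} to Corollary~\ref{cor:deriv-union}, with the seed facts of the order derivative playing the role that the weak independence facts played for the plain derivative. Recall that, by definition, $\Sigma^{+}$ is $\Sigma$ together with all identities $x \approx F(\mathbf{w}')$ for which $F$ occurs in $\Sigma$, $\Sigma \models x \approx F(\mathbf{w})$ for some tuple $\mathbf{w}$, and $w_i' \in \set{x,w_i}$ for every $i$. The single observation that drives everything is this: each such seed $x \approx F(\mathbf{w})$ is, after rewriting, a linear equation $F(\mathbf{w}) \approx x$ — one function symbol applied to variables, set equal to a variable — and so it is exactly of the form $F(\mathbf{x}) \approx y$ governed by Lemma~\ref{lem:weakindep-union} (with the tuple $\mathbf{w}$ in the role of $\mathbf{x}$ and the variable $x$ in the role of $y$).

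First I would dispose of the inclusion $\Sigma_{1}^{+} \cup \Sigma_{2}^{+} \subseteq (\Sigma_{1} \cup \Sigma_{2})^{+}$, which needs no real work. For $i \in \set{1,2}$ we have $\Sigma_{i} \subseteq \Sigma_{1} \cup \Sigma_{2}$, so any seed $\Sigma_{i} \models x \approx F(\mathbf{w})$ is also a seed for the union, and $F$ still occurs in $\Sigma_{1} \cup \Sigma_{2}$. Hence every identity that $\Sigma_{i}^{+}$ adjoins to $\Sigma_{i}$ is already adjoined in $(\Sigma_{1} \cup \Sigma_{2})^{+}$, and of course $\Sigma_{1} \cup \Sigma_{2} \subseteq (\Sigma_{1} \cup \Sigma_{2})^{+}$.

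The substantive inclusion is $(\Sigma_{1} \cup \Sigma_{2})^{+} \subseteq \Sigma_{1}^{+} \cup \Sigma_{2}^{+}$. An identity of $(\Sigma_{1} \cup \Sigma_{2})^{+}$ lying in $\Sigma_{1} \cup \Sigma_{2}$ is trivially in $\Sigma_{1}^{+} \cup \Sigma_{2}^{+}$, so I would concentrate on a generated identity $x \approx F(\mathbf{w}')$ arising from a seed $\Sigma_{1} \cup \Sigma_{2} \models x \approx F(\mathbf{w})$ with $F$ occurring in $\Sigma_{1} \cup \Sigma_{2}$. Since $\Sigma_{1}$ and $\Sigma_{2}$ have disjoint operation symbols, $F$ occurs in exactly one $\Sigma_{i}$. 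Applying Lemma~\ref{lem:weakindep-union} to the linear equation $F(\mathbf{w}) \approx x$ yields $\Sigma_{i} \models x \approx F(\mathbf{w})$; that is, the seed already holds in $\Sigma_{i}$. As $F$ occurs in $\Sigma_{i}$ and each $w_j' \in \set{x,w_j}$, the identity $x \approx F(\mathbf{w}')$ is then one of those adjoined to form $\Sigma_{i}^{+}$, so it lies in $\Sigma_{1}^{+} \cup \Sigma_{2}^{+}$.

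The only point requiring care — and the only place I expect any friction — is the consistency hypothesis built into Lemma~\ref{lem:weakindep-union}. When both $\Sigma_{1}$ and $\Sigma_{2}$ are consistent, the argument above gives literal equality of the two identity sets. If one of them, say $\Sigma_{1}$, is inconsistent, then so are $\Sigma_{1} \cup \Sigma_{2}$ and $\Sigma_{1}^{+}$, whence both $(\Sigma_{1} \cup \Sigma_{2})^{+}$ and $\Sigma_{1}^{+} \cup \Sigma_{2}^{+}$ axiomatize the trivial variety and the asserted equality holds at the level of varieties, which is all that Theorem~\ref{thm:npermconjlinid} requires. I would therefore read the equality in the same spirit as Corollary~\ref{cor:deriv-union}, and note the inconsistent case in one line rather than belaboring it.
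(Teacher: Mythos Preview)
Your proposal is correct and is exactly the argument the paper has in mind: the paper's proof is the single line ``Again, this is an immediate consequence of Lemma~\ref{lem:weakindep-union},'' and your write-up merely unpacks that immediacy by observing that each seed $x \approx F(\mathbf{w})$ is a linear identity to which Lemma~\ref{lem:weakindep-union} applies. Your handling of the consistency hypothesis is a reasonable aside that the paper leaves implicit.
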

\begin{proof}
 Again, this is an immediate consequence of Lemma~\ref{lem:weakindep-union}.
\end{proof}

\begin{corollary}\label{cor:orderderjoinn}
 Let $\V_{1}$, $\V_{2}$ be linear idempotent varieties. Then for every $n \in \N$, $(\V_{1} \vee \V_{2})^{+(n)} = \V_{1}^{+(n)} \vee \V_{2}^{+(n)}$.
\end{corollary}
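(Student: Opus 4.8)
The plan is to argue by induction on $n$, exactly paralleling the proof of Corollary~\ref{cor:derjoinn} for the ordinary derivative, with the lemma immediately preceding this corollary playing the role that Corollary~\ref{cor:deriv-union} plays there. For $n = 0$ the statement is the trivial identity $(\V_1 \vee \V_2)^{+(0)} = \V_1 \vee \V_2 = \V_1^{+(0)} \vee \V_2^{+(0)}$. For $n = 1$, I would write $\V_1$ and $\V_2$ as the varieties axiomatized by sets $\Sigma_1$ and $\Sigma_2$ of linear idempotent identities over disjoint sets of function symbols; since the join of two varieties is axiomatized by the union of their axiom sets (see \cite{GT84}), the preceding lemma gives $(\Sigma_1 \cup \Sigma_2)^+ = \Sigma_1^+ \cup \Sigma_2^+$, and translating back to varieties yields $(\V_1 \vee \V_2)^+ = \V_1^+ \vee \V_2^+$.

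For the inductive step, suppose the claim holds for some $k$. The key observation I would make first is that the order derivative preserves the property of being linear and idempotent: the identities adjoined in forming $\Sigma^+$ all have the shape $x \approx F(\mathbf{w'})$, which is linear, and idempotence is a condition on the function symbols occurring in $\Sigma$, which are unchanged by the operation. Consequently $\Sigma_i^{+(k)}$ is again a set of linear idempotent identities, $\V_i^{+(k)}$ is again a linear idempotent variety for $i \in \{1,2\}$, and the two still involve disjoint sets of function symbols. I can therefore apply the $n=1$ case (i.e. the preceding lemma) to the pair $\V_1^{+(k)}$, $\V_2^{+(k)}$. Combining this with the induction hypothesis,
\[
(\V_1 \vee \V_2)^{+(k+1)} = \bigl((\V_1 \vee \V_2)^{+(k)}\bigr)^+ = \bigl(\V_1^{+(k)} \vee \V_2^{+(k)}\bigr)^+ = \V_1^{+(k+1)} \vee \V_2^{+(k+1)},
\]
which closes the induction.

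Since the substantive content --- that weak independence of $F$ in the union is already witnessed inside the single system containing $F$ --- has been packaged into Lemma~\ref{lem:weakindep-union} and its order-derivative consequence, I expect no real obstacle here; the corollary is essentially bookkeeping. The only point requiring genuine care is the preservation claim in the inductive step, namely that each iterate $\Sigma_i^{+(k)}$ remains linear and idempotent so that the lemma may be reapplied. I would also note that this argument needs no consistency hypothesis: even if some iterate $\Sigma_i^{+(k)}$ collapses to the inconsistent theory (so that $\V_i^{+(k)} = \mathbf{1}$), it remains a set of linear idempotent identities as a syntactic object, and the stated equality continues to hold.
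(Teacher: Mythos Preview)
Your proof is correct and follows the same approach the paper takes---or rather, leaves implicit: the paper states this corollary without proof, treating it as an obvious consequence of the preceding lemma, exactly as it does for Corollary~\ref{cor:derjoinn}. Your explicit induction, including the verification that the order derivative preserves linearity and idempotence so the lemma may be reapplied, and your remark on the inconsistent case (which is harmless at the level of varieties even though Lemma~\ref{lem:weakindep-union} itself carries a consistency hypothesis), simply fills in details the paper omits.
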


The following result is part of Theorem~7 of \cite{Fr12}.

\begin{theorem}
 Let $\Sigma$ be a set of linear idempotent identities. Then the variety axiomatized by $\Sigma$ is $n$-permutable for some $n$ if and only if some iterated order derivative of $\Sigma$ is inconsistent.
\end{theorem}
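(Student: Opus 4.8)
The plan is to prove the biconditional by linking iterated order derivatives to the Hagemann--Mitschke characterization of $n$-permutability: a variety is $n$-permutable for some $n$ if and only if its clone contains ternary terms $q_0, \dots, q_n$ with $q_0(x,y,z) \eq x$, $q_n(x,y,z) \eq z$, and $q_i(x,x,y) \eq q_{i+1}(x,y,y)$ for $0 \le i < n$. The bridge between these terms and the order derivative is the observation that the middle equations let one ``walk up'' the chain: from $q_0(x,y,z)\eq x$ one gets the absorption $x \eq q_1(x,y,y)$, and each application of the order derivative advances the index, since replacing the trailing occurrences of $y$ in $q_i(x,y,y)$ by $x$ yields $q_i(x,x,y)\eq q_{i+1}(x,y,y)$, hence $x \eq q_{i+1}(x,y,y)$.

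First I would dispatch the direction ``some $\Sigma^{+(k)}$ inconsistent $\Rightarrow$ $n$-permutable'', which I expect to hold for every idempotent $\Sigma$. The point is that every identity adjoined by the order derivative is again a linear absorption of the form $v \eq F(\mathbf{w})$, so each $\Sigma^{+(j)}$ remains linear idempotent, and an inconsistency $\Sigma^{+(k)}\models x \eq y$ is witnessed by a finite equational derivation that introduces only such absorptions. I would read the Hagemann--Mitschke terms directly off this derivation: each stage of the iteration contributes one shift of the chain, so the number of stages $k$ bounds the length $n$ of the resulting chain, and the accumulated substitutions assemble into terms $q_0, \dots, q_n$ satisfying the required equations. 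This yields $n$-permutability of $\V$.

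For the converse, $\V$ $n$-permutable $\Rightarrow$ some $\Sigma^{+(k)}$ inconsistent, I would run the walk-up argument: from Hagemann--Mitschke terms one starts with $x \eq q_1(x,y,y)$ and applies the order derivative $n-1$ times to reach $x \eq q_n(x,y,y) \eq y$, so the chain collapses and the relevant iterated order derivative becomes inconsistent. The genuine difficulty is that the $q_i$ are arbitrary compositions of the operations of $\Sigma$, whereas the order derivative is defined only on the operation symbols occurring in $\Sigma$ and only introduces single-symbol absorptions. The content to be supplied, and where linearity is essential, is that the composite shifts $q_i(x,x,y)\eq q_{i+1}(x,y,y)$ are nonetheless realized by single-symbol absorptions provable at the appropriate iterated stage of the given linear $\Sigma$; one must decompose each composite absorption into single-symbol ones, exploiting that in a linear idempotent theory the provable identities of the controlled shape $v \eq F(\mathbf{w})$ are tightly constrained --- the same phenomenon underlying Lemma~\ref{lem:weakindep-union}.

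I expect this last step --- the passage from clone-level Hagemann--Mitschke terms to single-symbol absorptions generated by iterating the order derivative on the linear axioms --- to be the main obstacle. It requires controlling precisely which identities of the form $v \eq F(\mathbf{w})$ follow from a linear idempotent theory and showing that iteration produces exactly the absorptions needed to execute each shift; the linear and idempotent hypotheses are what keep this term analysis finite and make the chain close up after finitely many stages.
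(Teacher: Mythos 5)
First, a point of order: the paper does not prove this statement at all --- it is imported as a black box from Freese's unpublished manuscript (``part of Theorem~7 of \cite{Fr12}'') and then used in the proof of Theorem~\ref{thm:npermconjlinid}. So there is no internal proof to compare yours against; it has to be judged against the known argument, which runs through compatible quasiorders and Hagemann's characterization ($\V$ is $n$-permutable for some $n$ if and only if no member of $\V$ carries a nontrivial compatible quasiorder), not through a direct syntactic manipulation of Hagemann--Mitschke chains. Measured against that, your proposal is an outline whose two hard cores are both missing. In the direction ``$\Sigma^{+(k)}$ inconsistent $\Rightarrow$ $n$-permutable'', the claim that one can ``read the Hagemann--Mitschke terms directly off'' a derivation of $x \approx y$ is not a proof step: such a derivation uses arbitrary identities of $\Sigma$ (not only the adjoined absorptions), mixes expanding and collapsing rewrites, and you give no mechanism for orienting it into a chain $x \approx q_1(x,y,y)$, $q_i(x,x,y) \approx q_{i+1}(x,y,y)$, $q_n(x,x,y) \approx y$; your side claim that the number $k$ of derivative stages bounds the chain length $n$ is unsupported. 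The semantic content of an order-derivative identity $x \approx F(\mathbf{w}')$ is that it holds modulo the symmetrization $\preceq \cap \succeq$ of a compatible quasiorder, and only at evaluations where the value of $x$ sits below the values of the replaced variables; this restriction on evaluations is exactly what blocks a naive induction along the derivation, and it is why the real proof works with quasiorders on free algebras.

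In the converse direction you correctly isolate the crux --- the Hagemann--Mitschke terms $q_i$ are composite, while the order derivative only adjoins absorptions $x \approx F(\mathbf{w}')$ for the \emph{basic} symbols of $\Sigma$ --- but you then leave it as a declared obstacle rather than resolving it. That decomposition is the entire content of the theorem in the linear case (it is precisely where linearity enters; the equivalence fails for general idempotent $\Sigma$), and carrying it out directly would require a term-analysis result in the spirit of, but substantially stronger than, Lemma~\ref{lem:weakindep-union}, tracking derivative stages through term composition. The standard route sidesteps this entirely by arguing contrapositively: if every $\Sigma^{+(k)}$ is consistent, then the union $\bigcup_k \Sigma^{+(k)}$ is a consistent linear idempotent theory closed under the order derivative, and from such a theory one builds a model with a nontrivial compatible quasiorder, contradicting $n$-permutability via Hagemann's theorem. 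Your clone-level walk-up computation (deriving $x \approx q_{i+1}(x,y,y)$ at stage $i$) is a correct sanity check of the intuition behind the order derivative, but as it stands the proposal is a plan, not a proof: both directions stop exactly where the work begins.
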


\begin{proof}[Proof of Theorem~\ref{thm:npermconjlinid}]
The result follows just as in the proof of Theorem~\ref{thm:nontrivconjlinid}, using the order derivative instead of the derivative.
\end{proof}


\section{Identities in linear varieties}\label{sec:ident-lin-varieties}

In this section we prove the important technical Lemma \ref{lem:weakindep-union}, on which the proofs of Theorems 1.1, 1.2 and 1.3 were based. The proof uses arguments over rewriting sequences in a fashion similar to
\cite{Be06} and \cite{Be07}.

In the following we will introduce terminology for this task.
Note that some of our notations are variants (usually generalizations) of established meanings. Also, some concepts have complicated formal descriptions but are easy to grasp informally;
we will occasional stick with an informal term whose meaning should be clear in order to avoid excessive notation.

For a given signature, consider a set $\Sigma$ of linear equalities over a set $V$ and the set $T(X)$ of $\Sigma$-terms over a set $X$, which we may consider to be infinite.
For simplicity we will use $x,y,z$ to refer to elements of
$X$ and $v,w$ to refer to elements of $V$ (occasionally with subscripts).

Given a term $t \in T(X)$, an \emph{occurrence} $s$ in $t$ is a path in the syntactic tree of $t$ together with the subterm of $t$ corresponding to the subtree reached by the path. We will refer to the path as the \emph{position}
of the occurrence and use $\bar s$ to denote the corresponding subterm. Note that a position can be identified with a (potentially empty) finite list of integers. Given an occurrence $s$ in an occurrence $t$ and
another occurrence $s'$ in an occurrence $t'$, we can talk of the occurrence that is in the same relative position towards $s'$ as $s$ is to $t$ by concatenating
 the position of $s'$ in $t'$ with the position of $s$ in $t$; this requires that the resulting position is ``compatible" with $t'$ in an obvious way.

We will extend structural concepts from terms onto occurrences. For example, if we say that an occurrence $t$ has the form
$f(s_1, \dots, s_n)$ for some occurrences $s_i$, this will mean that the terms $\bar s_i$ satisfy $\bar t= f(\bar s_1, \dots,\bar s_n)$ and that the position of $s_i$ is the $i$-th child of $t$. If we introduce an occurrence
$t$ without positional context, its position should be taken as the root of the corresponding term $\bar t$.

A \emph{derivation} in $\Sigma$ is a sequence $t_0, t_1,\dots, t_n$ of occurrences, such  that
$t_{i}$ is obtained from $t_{i-1}$ by one rewriting step with an equality $\eps_i \in \Sigma \cup \Sigma^{\delta}$. Existence of a derivation is clearly equivalent to $\Sigma \models \bar t_0 \approx \bar t_n$.
We additionally require
that a derivation is enriched with  enough ``syntactical information" to completely reconstruct
the rewriting procedure. Concretely, for each pair $(t_{i-1}, t_{i})$ an explicit $\eps_i$  is given together with two occurrences $s$ in $t_{i-1}$ and $s'$ in $t_{i}$ such that $t_i$ is obtained from $t_{i-1}$ by rewriting
$\bar s$ into $\bar s'$ using the equality $\eps_i$. We will moreover adopt the convention that if $t \approx t'$ is a rewriting step using the equality $u \approx u'$, then  $u$ matches up with
 $\bar s$ and  $u'$
with $\bar s'$. We will also consider reverse  pairs $(t_{i}, t_{i-1})$ to be single rewriting steps of the derivation, in this case we use $\eps_i^{\delta}$ as the corresponding equality.

\begin{example} \label{ex:deriv}
In the following examples, the relevant equality is applied to the occurrences specified by underlining.
\begin{enumerate}
\item Let $t=f(\underline{g(h(x),h(x),x)})$, $t'=f(\underline{h(x)})$, then $t'$ is obtained from $t$ by rewriting with the equation $g(v,v,w) \approx v$. Note that
$t'$ could have also been obtained thought the equation $g(v_1,v_2,v_3) \approx v_1$, and that $t$ is obtained from $t'$ by rewriting with $v \approx g(v,v,w)$.
\item $t'= \underline{f(y,x)}$ is obtained from $ \underline{f(x,y)}$ by the equation $f(u,v) \approx f(v,u)$. It could have also been obtained by the equation $f(v,u) \approx f(u,v)$.
\item $t'=f(\underline{f(f(x))})$ is obtained from $f(\underline{f(x)})$ with the equation $f(v) \approx f(f(v))$, using the underlined occurrences. It could have also been obtained with the same equation applied to the
 occurrences
given by  $t=\underline{f(f(x))}$, $t'=  \underline{f(f(f(x)))}$.
\end{enumerate}
\end{example}

The syntactic information contained with each derivation will ensure that various constructions below will be well-defined.
Now consider a fixed derivation $t=t_0, t_1,\dots, t_n=t'$. By an \emph{occurrence of the derivation}, we mean an index $i \in \{0,\dots,n\}$ together with an occurrence $s$ of $t_i$.
Once again, let $\bar{s} \in T(X)$ denote the underlying term of $s$.
The occurrences of a derivation are naturally ordered by inclusion; we will denote this order by ``$\le$". Note that occurrences from different $t_i$ are always incomparable under $\le$.

We will next define a quasi-order $\succeq$ on the occurrences of a derivation in terms of a generating relation $\succeq'$. Let $u \le t_i=:t$ and let $t'$ be one of $t_{i-1}, t_{i+1}$, such that
$t'$ is obtained from $t$ by rewriting with the equation $\eps$. Let $s \le t, s' \le t'$ be the occurrences involved in the rewriting step.
We will define pairs of the form $(u,\underline{\phantom{u}})$ in $\succeq'$ according to particulars of the rewriting step in relation to $u$, by distinguishing several cases. For our definition to be well-defined, we need
that all positions are actually valid in their containing occurrence; the routine verification of this is left to the reader.
\begin{defn} \label{def:quasi}
With notation as above, we define pairs in $\succeq'$ as follows:
\begin{enumerate}
\item If $u \not\le s$ and $ s \not\le u$, let $u' \le t'$ be the occurrence  that is in the same position in $t'$ as $u$ is in $t$. In this case we
set $u\succeq'u'$. Note that $\bar u = \bar u'$. \label{c:incomp}
\item If $s<u$, let $u' \le t'$ be the occurrence that is in the same  position in $t'$ as $u$ is in $t$;
set $u\succeq'u'$. Note that $\bar u'$ can be obtained from $\bar u$ by rewriting with $\eps$.  \label{c:below}
\item  \label{c:above} This case covers the situation that either $u<s$ (for arbitrary $\eps$) or that $u=s$ and $\eps$ is of the form $v\approx f(v_1,\dots, v_n)$ or $v \approx v$ for some $v,v_i \in V$. As $\eps$ is linear,
 there is a unique $w \in V$ appearing in the left hand side
 of $\eps$ and a
unique occurrence $p$, $u \le p \le s$, such that the rewriting step matches  $p$ with $w$ ($p$ is either equal to $s$, if $\eps$ has the form $v\approx f(v_1,\dots, v_n)$, in which
case $v=w$, or is ``one level below" $s$, in the other cases).

For each occurrence $p^\ast_i$ with $\bar p^\ast_i = w$ in $\eps$, let  $p_i$ be the occurrence in the same position  in $s$ or $s'$, as $p_i^\ast$ is in the left or right hand side of $\eps$, respectively.
All of these occurrences have the same underlying term as $p$ (note that $p$ itself is one of the $p_i$; it is possible that $p$ is the only such occurrence). Let $u_i$ be the occurrences that are in the
same position in $p_i$ as $u$ is in $p$, all of which have the same underlying term as $u$. We set $u\succeq'u_i$ for each such $u_i$.

\item \label{c:equal} Let $u=s$ and $\eps$ have the form  $f(v_1,\dots,v_i)=g(w_1,\dots,w_j)$ or   $f(v_1,\dots,v_i)=v$. We set $u \succeq' s'$.
\end{enumerate}
We define $\succeq'$ to be the smallest set obtained by the above rules, as $(t,t')$ run through all pairs of the form $(t_i,t_{i-1})$ and $(t_i, t_{i+1})$, and $u$ runs though all occurrences with $u \le t_i$.
We let $\succeq$ be the reflexive and transitive closure of $\succeq'$.
\end{defn}
\begin{example}
Below, we will give various examples of occurrences $t$, rewriting equation $\eps$, and rewritten occurrence $t'$. As in Example \ref{ex:deriv}, we will indicate syntactical information by underlining.
An overbrace will indicate the occurrence corresponding to $u$ while an underbrace will indicate all occurrences $u'$ with
$u \succeq' u'$. Our numbering corresponds to that in Definition \ref{def:quasi}.
\begin{enumerate}
 \item $f(\overbrace{g(x)},\underline{f(y)})$; $f(v) \approx v$; $f(\underbrace{g(x)}, \underline{y})$
 \item $f(\overbrace{g(x,\underline{f(y)})})$;  $f(v) \approx v$; $f(\underbrace{g(x,\underline{y})})$
 \item \begin{enumerate} \item $f(\underline{g(\overbrace{\underbrace{h(x,y)}},\underbrace{h(x,y)},h(x,y))})$; $g(v,v,w) \approx h(v,w)$; $f(\underline{h(\underbrace{h(x,y)},h(x,y))})$
 \item $f(\underbrace{\overbrace{\underline{g(x,y)}}})$; $v \approx h(v,v,w)$; $f(\underline{h(\underbrace{g(x,y)},\underbrace{g(x,y)},g(x,y))})$
 \end{enumerate}
\item $f(\underline{\overbrace{g({h(x,y)},{h(x,y)},h(x,y))}})$; $g(v,v,w) \approx h(v,w)$; $f(\underbrace{\underline{h(h(x,y),h(x,y))}})$
 \end{enumerate}
\end{example}

 The following lemma is obvious from the definition of $\succeq$.
\begin{lemma} \label{lem:Sequivalent}
With notation as above, if $s \succeq s'$ then $\Sigma \models \bar s \approx \bar s'$.
\end{lemma}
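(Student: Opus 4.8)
The plan is to reduce the statement for the reflexive-transitive closure $\succeq$ to the one-step generating relation $\succeq'$, and then verify the base case by inspecting the four clauses of Definition~\ref{def:quasi}. To this end, consider the binary relation $R$ on the occurrences of the fixed derivation defined by $(a,b) \in R$ iff $\Sigma \models \bar a \approx \bar b$. Since provable equality modulo $\Sigma$ is an equivalence relation on $T(X)$, the relation $R$ is in particular reflexive and transitive. As $\succeq$ is by definition the reflexive and transitive closure of $\succeq'$, it therefore suffices to establish the containment $\succeq' \subseteq R$: the desired $\succeq\, \subseteq R$ then follows automatically, because the reflexive-transitive closure of a relation contained in a reflexive and transitive relation is still contained in it.

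So I would fix a generating pair $u \succeq' u'$ and argue, case by case, that $\Sigma \models \bar u \approx \bar u'$. In Cases~\ref{c:incomp} and \ref{c:above} the definition explicitly records that $u'$ (respectively each $u_i$) has the same underlying term as $u$; hence $\bar u = \bar u'$ holds literally, and $\Sigma \models \bar u \approx \bar u'$ by reflexivity of $\models$. In Cases~\ref{c:below} and \ref{c:equal} the occurrence $u'$ is instead obtained from $u$ by a single rewriting step using the equation $\eps \in \Sigma \cup \Sigma^{\delta}$ of the step $(t,t')$: in Case~\ref{c:below} this step acts on the proper subterm $\bar s$ of $\bar u$, whereas in Case~\ref{c:equal} (where $u=s$) it acts at the root and sends $\bar u = \bar s$ to $\bar s'$. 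In either case $\bar u \approx \bar u'$ is the conclusion of a one-step derivation, so it is a $\Sigma$-consequence by the soundness of rewriting already observed in the text (``existence of a derivation is equivalent to $\Sigma \models \bar t_0 \approx \bar t_n$''), using that $\eps$, or its reverse $\eps^{\delta}$, is a $\Sigma$-consequence together with the replacement property of equational logic. This yields $\succeq' \subseteq R$ and completes the proof.

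I do not expect a genuine obstacle here, in line with the paper's remark that the lemma is ``obvious from the definition''; the only care needed is in the bookkeeping of the two ``equal underlying term'' cases. Concretely, one must confirm that in Case~\ref{c:above} the terms really coincide, which rests on the linearity of $\eps$: since the matched variable $w$ occurs linearly, every occurrence of $w$ on either side of $\eps$ is instantiated by the same term, so each $p_i$ shares the underlying term of $p$, and consequently each $u_i$ shares the underlying term of $u$. One should likewise note that rewriting with a reversed equation $\eps^{\delta}$ is sound precisely because $\Sigma$-provable equality is symmetric. Once these two observations are in place, all four cases close at once and the lemma follows.
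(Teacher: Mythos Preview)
Your proof is correct and is exactly the routine verification the paper elides when it calls the lemma ``obvious from the definition of $\succeq$''. One small remark: the equality $\bar u_i = \bar u$ in Case~\ref{c:above} does not actually rest on linearity of $\eps$ but simply on the fact that a single substitution instantiates every occurrence of $w$ by the same term; linearity is used only to guarantee the existence of a unique variable $w$ sitting above $u$ in that case.
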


We now use the above definitions in the situation of Lemma \ref{lem:weakindep-union} with $\Sigma:= \Sigma_1 \cup \Sigma_2$.
 Without loss of generality, assume that  $F$ occurs in $\Sigma_{1}$  and that
 there is an identity
\begin{align*}
F(\mathbf{x}) \approx y  \tag{$Id$}
\end{align*}
such that  $\Sigma_{1} \cup \Sigma_{2} \models (Id)$.
Fix a derivation of this identity,  $t_{0},\dots,t_{n}$, with $F(\mathbf{x})= \bar t_{0}$, $\bar t_{n}=y$, corresponding equalities $\eps_i \in \Sigma \cup \Sigma^{\delta}$, and related occurrences $s_i \le t_{i-1} $, $s_i' \le t_i$.

\begin{claim} \label{cl:variable}
With notation as above, we have that $t_0 \succeq t_n$.
\end{claim}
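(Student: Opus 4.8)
```latex
The plan is to establish the claim $t_0 \succeq t_n$ by a careful bookkeeping argument that tracks how a single occurrence propagates through the derivation under the quasi-order $\succeq'$. The key observation is that $t_n = y$ is a single variable, and $t_0 = F(\mathbf{x})$ has $F$ as its root operation symbol. Since $F$ occurs only in $\Sigma_1$ and the two signatures are disjoint, I would exploit the fact that whenever a rewriting step uses an equality $\eps_i$ from $\Sigma_2 \cup \Sigma_2^{\delta}$, that step cannot directly introduce or eliminate the root $F$ of the relevant occurrence. The strategy is to show that the root occurrence $t_0$ of the first term is related by $\succeq'$, step by step, to \emph{some} occurrence in each successive $t_i$, and that this chain terminates at the root of $t_n$, namely the variable $y$.

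First I would unwind Definition \ref{def:quasi} to understand, at each rewriting step $(t_{i-1}, t_i)$, precisely how the occurrence tracking $t_0$ is carried forward. If at stage $i-1$ the tracked occurrence $u \le t_{i-1}$ is incomparable with the rewrite site $s_i$ (case \ref{c:incomp}) or strictly above it (case \ref{c:below}), then $u$ maps to the occurrence $u'$ in the same position in $t_i$, and $u \succeq' u'$; the underlying term is preserved or rewritten one level down, but the root position survives. If the tracked occurrence lies strictly below the rewrite site, or coincides with it in one of the enumerated ways (cases \ref{c:above} and \ref{c:equal}), then $\succeq'$ again provides a successor occurrence (possibly several, from which I choose one appropriately). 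The point is that in every case, $\succeq'$ supplies at least one successor occurrence in $t_i$, so by transitivity of $\succeq$ I obtain a chain $t_0 \succeq u_1 \succeq \cdots \succeq u_n$ with $u_i \le t_i$.

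The main obstacle, and the step requiring the most care, is to verify that the terminal occurrence $u_n$ is in fact $t_n$ itself, i.e., that the chain lands on the \emph{root} occurrence of $t_n = y$ rather than on some proper subterm. Here I would argue by descent on the structure: the root of $t_0$ carries the operation symbol $F$, and I must trace when and how this root gets consumed. Because the final term $y$ is a bare variable containing no operation symbols at all, the root occurrence must eventually be rewritten away by a step of case \ref{c:equal} with an equality of the form $f(v_1,\dots,v_i) \approx v$ (collapsing an operation to a variable), at which point $\succeq'$ sends the site $s = u$ to $s'$, the root of the right-hand side. The delicate part is ensuring consistency: by Lemma \ref{lem:Sequivalent}, any $u \succeq u'$ satisfies $\Sigma \models \bar u \approx \bar u'$, so the underlying terms along the chain all denote the same element, and in particular $\bar{u_n}$ must be provably equal to $\bar t_0 = F(\mathbf{x})$; combined with $u_n \le t_n = y$, this forces $u_n$ to be the root $t_n$.

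I expect the bulk of the work to be routine case-checking against Definition \ref{def:quasi}, with the genuine content lying in the invariant that the tracked occurrence never ``disappears'' into an incomparable branch --- that is, that the rewriting at each step always interacts with the subtree containing our tracked root in a way covered by one of the four cases. Once this invariant is secured, the existence of the full chain $t_0 \succeq \cdots \succeq t_n$ follows by transitively closing $\succeq'$, which is exactly $\succeq$, giving $t_0 \succeq t_n$ as required.
```
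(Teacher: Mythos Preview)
Your forward-tracking argument has a genuine gap in Case~\ref{c:above}. You assert that ``in every case, $\succeq'$ supplies at least one successor occurrence in $t_i$,'' but this is false: if the tracked occurrence $u$ lies strictly inside the rewrite site $s$ and is matched (via its containing occurrence $p$) to a variable $w$ that appears \emph{only} on the left-hand side of $\eps$, then every $u_j$ produced by Case~\ref{c:above} sits in $t_{i-1}$, not in $t_i$. Concretely, take a rule such as $h(v,w)\approx v$; if your tracked occurrence is inside the second argument of an application of $h$, it is simply erased and the chain cannot be continued forward. Your final sentence about $u_n \le t_n = y$ forcing $u_n = t_n$ is correct \emph{if} a chain reaching $t_n$ exists, but the existence of such a chain is exactly what breaks down.

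A telltale sign that something is missing is that your argument never uses the hypothesis that $\Sigma_1$ and $\Sigma_2$ are consistent; yet the claim is false without it. (With $\Sigma$ containing both $h(v,w)\approx v$ and $h(v,w)\approx w$, one can derive $F(x)\to h(F(x),y)\to y$, and one checks directly that $t_0 \not\succeq t_2$.) The paper's proof is therefore necessarily global rather than step-by-step: it takes $T=\{u : t_0 \succeq u\}$, replaces every variable occurrence dominated by an element of $T$ with a fresh variable $z$, and shows (Lemma~\ref{lem:2nd deriv}) that the modified sequence $p_0,\dots,p_n$ is still a valid $\Sigma$-derivation. Since $t_0\in T$ gives $\bar p_0 = F(z,\dots,z)$, if $t_n\notin T$ one would obtain $\Sigma\models F(z,\dots,z)\approx y$ with $z\neq y$, contradicting consistency. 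That contradiction is the engine of the proof, and it cannot be replaced by a local chain-building argument.
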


Let $T$ be the set of all occurrences $u$ in the derivation such that $t_0 \succeq u$. Pick a variable $z \in X$ that does not occur in the derivation. For each $t_i$
 we will define an occurrence $p_i$ as follows. Consider each occurrence such that $u \le t$ for some $t \in T$ and that the underlying term of $u$ is a variable from the set $X$. Now let $\bar p_i$ be the term that it obtained
 from $t_i$ by replacing the variable of each such $u$ with $z$.

 \begin{lemma}\label{lem:2nd deriv}
  $p_0,\dots, p_n$ is once again a $\Sigma$-valid derivation, with the same $\eps_i$ and corresponding syntactical information.
  \end{lemma}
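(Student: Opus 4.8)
The plan is to reduce everything to a single rewriting step. Validity of a derivation is a step-wise condition, and since the lemma insists that the equalities $\eps_i$ and their associated occurrences $s_i, s_i'$ are retained, it is enough to fix an index $i$ and verify that $p_i$ is obtained from $p_{i-1}$ by one rewriting step with $\eps_i$, applied at the recorded positions. The first observation is that replacing variable leaves by the fresh variable $z$ leaves the underlying tree skeleton untouched (no function symbol or arity changes), so every position, in particular those of $s_i$ and $s_i'$, is still a legitimate occurrence in $p_{i-1}$ and $p_i$. Thus the only thing that can fail is the \emph{matching} that realizes the step.

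Writing $\eps_i$ in the direction in which it is applied as $L \approx R$, with matching substitution $\sigma$ so that $\bar s_i = L\sigma$ and $\bar s_i' = R\sigma$, I would aim to produce a single substitution $\sigma^{z}$ for which the subterm of $p_{i-1}$ at the position of $s_i$ equals $L\sigma^{z}$ and that of $p_i$ at $s_i'$ equals $R\sigma^{z}$. The natural definition is to let $\sigma^{z}(w)$ be the term $\sigma(w)$ with its \emph{tracked} variable leaves (those leaf occurrences $u$ with $\bar u \in X$ and $u \le t$ for some $t \in T$) rewritten to $z$. Since $\eps_i$ is linear, $L$ and $R$ each carry at most one function symbol, so every leaf of $\bar s_i$ and of $\bar s_i'$ lies inside one of the matched subterms $\sigma(w)$; the whole problem therefore reduces to \emph{well-definedness} of $\sigma^{z}$. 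A given variable $w$ may be matched at several occurrences of $w$ in $L$ and in $R$ (as in a pattern such as $p(x,y,y)$), and one must check that all these copies of $\sigma(w)$ carry the same pattern of tracked leaves, so that $\sigma^{z}(w)$ does not depend on the copy. The agreement \emph{outside} $s_i$ is immediate from Definition \ref{def:quasi}(\ref{c:incomp}): occurrences incomparable with the rewritten region retain their position and underlying term, and applying the case to both step-pairs $(t_i,t_{i-1})$ and $(t_i,t_{i+1})$ shows that the tracked leaves outside $s_i$ are literally the same in $p_{i-1}$ and $p_i$.

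The heart of the argument, and the step I expect to be the main obstacle, is this copy-invariance of the tracked leaves. I would prove that whenever $u$ and $u_j$ are corresponding variable leaves in two copies of a matched subterm $\sigma(w)$ — so that $u \succeq' u_j$ by Definition \ref{def:quasi}(\ref{c:above}) — then $u$ is dominated by some $t \in T$ if and only if $u_j$ is. The difficulty is that $T$ is the $\succeq$-downset of $t_0$, whereas ``tracked'' is phrased through the containment order $\le$, so one must interweave the two relations. The key structural fact making this tractable is that, for a linear equation, the occurrence $p$ matched to $w$ sits at an immediate child of $s_i$ (or equals $s_i$), as recorded in Definition \ref{def:quasi}(\ref{c:above}); consequently a dominating occurrence $t$ with $u \le t$, being an ancestor of $u$ just like $p$, is comparable to $p$ and falls into one of two regimes. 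If $t$ lies inside $p$, then $t$ is itself inside the matched subterm, so Definition \ref{def:quasi}(\ref{c:above}) gives $t \succeq' t'$ for the copy $t'$ in the other occurrence of $w$; hence $t' \in T$ and $u_j \le t'$. If instead $t$ contains (or equals) the rewritten region $s_i$, then $t$ contains every copy lying in $s_i$, and the appropriate case of Definition \ref{def:quasi} — \ref{c:below} when the containment is strict, and \ref{c:above} when $t = s_i$ — carries $t$ across the step to a $T$-occurrence dominating the copies on the other side. Assembling these cases yields a well-defined $\sigma^{z}$ realizing the step after substitution, and together with the off-$s_i$ agreement this shows that $p_i$ is obtained from $p_{i-1}$ by $\eps_i$ at the recorded positions, completing the induction and hence the lemma.
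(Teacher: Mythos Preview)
Your plan matches the paper's: fix one rewriting step, observe that the tree skeleton is unchanged so the recorded positions survive, and then verify that the pattern of $z$-substituted leaves is consistent across all copies of each matched subterm by analyzing where a dominating $T$-occurrence sits relative to $s_i$. The $\sigma^{z}$ packaging is a clean way to phrase what the paper does more directly.

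There are two spots where your case enumeration is incomplete. For the agreement outside $s_i$, a variable leaf $u\not\le s_i$ may well be dominated by some $t\in T$ that is a \emph{proper ancestor} of $s_i$ (for instance $t=t_{i-1}$ itself); such $t$ is not incomparable with $s_i$, so Case~\ref{c:incomp} alone does not suffice --- you also need Case~\ref{c:below} to transport $t$ across the step to a $T$-occurrence in $t_i$ dominating the corresponding leaf. For the situation where the dominating occurrence equals $s_i$, Case~\ref{c:above} only applies when the left-hand side of $\eps_i$ is a single variable; when it has the form $f(v_1,\dots,v_n)$ you must invoke Case~\ref{c:equal} instead to obtain $s_i\succeq' s_i'$ and hence $s_i'\in T$, which then dominates the copies on the other side. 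The paper's proof makes both of these distinctions explicitly; once you add them, your argument is complete and essentially identical to the paper's.
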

  \begin{proof}
 Consider the step  from $t_{i-1}$ to $t_i$, so that $\eps:=\eps_i$ rewrites the occurrence $s:=s_i \le t_{i-1}$ into  $s':=s_i' \le t_i$.
 Let $q$ denote the occurrence that is in the same position in $p_{i-1}$ as $s$ is in $t_{i-1}$, and  let $q'$ denote the occurrence that is in the same position in $p_{i}$ as $s'$ is in $t_{i}$.

 We first show that $\bar q$ is an instance of the left hand side of $\eps$.
  This is trivial of the left hand side of $\eps$ is a variable. So assume that the left hand side of $\eps$ has the form $f(\mathbf{v})$.
 Let $v_j=v_k$ for some $j \ne k$. As $\bar s$ is an instance of $f(\mathbf{v})$, we have that $ s=f(\mathbf{d})$, for some occurrences satisfying $\bar d_j=\bar d_k$. As the definition of $p_{i-1}$ only affects variables,
 $ q =f(\mathbf{e})$, where each $e_l$ can be obtained from the corresponding $d_l$ by changing some variables to $z$. In order to show that $q$ is an instance of  $f(\mathbf{v})$, we have to show that
 this happens for the same occurrences of variables in both $d_j$ and $d_k$. We will go through the various cases in which a variable can switch.

 If some occurrence $o \ge s$ is in $T$, then all variables in $e_j$ and $e_k$ are $z$ and we have $\bar e_j= \bar e_k$, as needed. If any occurrence $o_j \le d_j$ is in $T$,
 then the occurrence $o_k \le d_k$ that is in the same position in $d_k$ as $o_j$ is in $d_j$
  belongs to $T$ as well, by Case \ref{c:below}, and vice versa, by the symmetry in Case \ref{c:below}.
  Hence once again $\bar e_j= \bar e_k$ and we can conclude that $q$ is an instance of $f(\mathbf{v})$.

 It remains to show that the result of rewriting $p_{i-1}$ with $\eps$ applied to occurrence $q$ is $p_i$. It once again suffices to consider occurrences whose underlying  terms  are variables.
So let $u \le t_{i-1}$ be such an occurrence. Suppose that $u \not \le s$. Then there is a $u'$ in the corresponding position in $t_i$ with $\bar u=\bar u'$. Now suppose
that there is some occurrence $o \in T$ with $o \ge p$. Then $o$ is either incomparable with $s$ or $o > s$. %
In either case there is an occurrence $o'\in T$ in the same position in $t_i$ as $o$ is in $t_{i-1}$,
by Case \ref{c:incomp} or Case \ref{c:above} of the definition of $\succeq'$. The same argument holds in reverse if we start with the assumption that $u'$ is contained in an occurrence $o' \in T$
(recall that our definition of $\succeq'$ also included the reverse rewriting operations from $t_i$ to $t_{i-1}$). Hence $u$ is contained in an occurrence from $T$ if and only if $u'$ is.
Hence, when transforming from $(t_{i-1},t_i)$ to $(p_{i-1},p_i)$, $u$ changes its term to $z$ if and only if $u'$ changes. It follows that $p_{i-1}$ and $p_{i}$ are identical outside
 of the rewriting effected
occurrences $q$ and $q'$, as required.

Now let $v$ be a variable appearing in the equality $\eps$. Pick any occurrences $a \ne a'$ that correspond to $v$ during the rewriting process. Then $\bar a =\bar a'$.
In the case that $a,a' \le s$, we have already seen that variables switch to $z$ in exactly the same positions in $a$ and $a'$ when showing that $q$ is an instance of  the left hand side of $\eps$, and the same
result follows by symmetry if $a,a' \le s'$. Hence assume w.l.o.g. that $a\le s$, $a' \le s'$. %

Let $u \le a$ with
$\bar u$ being a variable and $u'$ be the occurrence that is in the same position in $a'$ as $u$ is in $a$. Now assume that there is an occurrence $o \in T$ with $o \ge u$.
   If $o \ge s$, then by Case \ref{c:below}, $s'$ and hence $a'$ and $u'$ is also contained in an occurrence $o' \in T$. If $o = s$ and $\eps$ has a form as in Case \ref{c:equal}, then by that case $s' \in T$, with
   $u' \le s'$. If $o =s$ and $\eps$ has the form ``$v \approx f(\mathbf{w})$" or ``$v \approx v$",
   then $a=s=o \in T$ and hence $u' \le a' \in T$ by Case \ref{c:above}.
   If $u \le o < s$, then $o \le a $, and, also by Case \ref{c:above},
   there is an $o' \in T$ with $u' \le o' \le a'$, namely the one that is in the same relative position in $a'$ as $o$ is in $a$.
In all cases, if $u$ is contained in an occurrence from $T$ than so is $u'$, and by symmetry, the converse holds.
As above $\bar b =\bar b'$ follows, where $b, b'$ are the occurrences in $p_{i-1},p_i$
that correspond to $a,a'$, and hence are also matched to the same variable $v$ in $\eps$.
Hence the underlying terms of any two occurrences matched to same variable in $\eps$ agree. As $p_{i-1}$ is unchanged from $p_i$ outside of the rewriting area, it follows that
$p_i$ can be obtained from $p_{i-1}$ by rewriting with $\eps$.
Hence $p_0,\dots, p_n$ is a $\Sigma$-valid derivation.
\end{proof}

We are now able to prove Claim \ref{cl:variable}. By Lemma \ref{lem:2nd deriv}, $\Sigma \models \bar p_0 \approx \bar p_n$. As $p_0 \in T$ by definition, we have $\bar p_0 = F(z,\dots,z)$.
If $p_0 \not\succeq p_n$, i.e. $p_n \not\in T$
we would have  $\Sigma \models F(z,\dots,z) \approx y$ with $z \ne y$, which implies inconsistence. The claim follows by contradiction from Lemma \ref{lem:oneisprime}.

By the definition of $T$ and $\succeq$ there are occurrences $q_0 \succeq' q_1 \succeq' \dots \succeq' q_k$ with $q_0 = t_0$ and $\bar q_k=\bar t_n$. By potentially shortening the
sequence, we may assume that $\bar q_i \ne y$ for $i<k$.
It is easy to see by induction that for all $q_i \ne r_k$, $\bar q_i$ has the form $f_i(\dots)$, where all $f_i$ are function symbols from the signature of $\Sigma_1$.

Now define a sequence of occurrences $r_0, r_1, \dots, r_k$ by setting $\bar r_i= \bar q_i$ for all $i$.

\begin{lemma} \label{lem:SSeq}
The sequence $r_0, r_1, \dots, r_k$ can be extended to a derivation over $\Sigma'=\Sigma \cup \{v\approx v\}$ by suitable  syntactical information. Moreover, this can be done while avoiding any rewriting step
using an equation
of the form $v \approx f(\mathbf{w})$ in a way in which the left hand side variable $v$ is matched to any $r_i$.
\end{lemma}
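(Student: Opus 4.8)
The plan is to walk along the chain $q_0 \succeq' q_1 \succeq' \cdots \succeq' q_k$ one generating step at a time, classify each step $q_i \succeq' q_{i+1}$ by the clause of Definition~\ref{def:quasi} that produced it, and read off the relationship between the underlying terms $\bar q_i = \bar r_i$ and $\bar q_{i+1} = \bar r_{i+1}$. Since a derivation over $\Sigma' = \Sigma \cup \{v \approx v\}$ is nothing but a sequence of terms together with enough syntactic bookkeeping to justify each single rewriting step, it will suffice to exhibit for every $i$ a single such step carrying $\bar r_i$ to $\bar r_{i+1}$; in particular no new intermediate terms need be introduced, and ``extending by syntactical information'' amounts to naming, for each $i$, an equation and a pair of occurrences.

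First I would dispose of the term-preserving clauses. Case~\ref{c:incomp} records $\bar u = \bar u'$, and in Case~\ref{c:above} every successor $u_i$ carries the same underlying term as $u$; so when $q_i \succeq' q_{i+1}$ comes from one of these clauses we have $\bar r_i = \bar r_{i+1}$ and realize the step by the trivial equation $v \approx v$. The remaining two clauses give genuine rewriting. In Case~\ref{c:below} we have $s < u$, so $\bar q_{i+1}$ arises from $\bar q_i$ by rewriting with $\eps_i$ at the position of $s$, which lies strictly below the root of $u$; placing $u$ at the root, this is a single rewriting of $\bar r_i$ at a proper subposition. In Case~\ref{c:equal} we have $u = s$ and $\eps_i$ of the form $f(\mathbf{v}) \approx g(\mathbf{w})$ or $f(\mathbf{v}) \approx v$, so $\bar r_{i+1} = \bar s'$ is obtained from $\bar r_i = \bar s$ by a single root rewriting whose left-hand side is compound. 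Concatenating these $k$ single steps produces the required $\Sigma'$-derivation $r_0, \dots, r_k$.

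For the \emph{moreover} clause I would simply inspect which equations can appear: the trivial $v \approx v$ (Cases~\ref{c:incomp} and~\ref{c:above}); an $\eps_i$ applied strictly below the root (Case~\ref{c:below}); and an $\eps_i$ with compound left-hand side applied at the root (Case~\ref{c:equal}). Thus an expanding equation $v \approx f(\mathbf{w})$ can occur only as the $\eps_i$ of a Case~\ref{c:below} step, where its left-hand side variable is matched to a \emph{proper} subterm of $r_i$, never to $r_i$ itself; a root rewriting never uses an equation with a variable on the left. Hence no step expands a variable matched to some $r_i$, which is precisely the restriction demanded. As a consistency check, $\bar r_i = f_i(\dots)$ is compound for $i < k$ while $\bar r_k = y$, so a variable $r_i$ is never even the source of a rewriting.

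The delicate point, which I would treat with the most care, is the subcase of Case~\ref{c:above} in which $u = s$ and $\eps_i$ has the form $v \approx f(v_1, \dots, v_n)$ --- that is, when the original derivation genuinely \emph{expands} $\bar s$ from an instance of the variable $v$ into an instance of $f(v_1, \dots, v_n)$. One must verify that the tracked occurrence is \emph{copied} into the positions matched to $v$ on the right-hand side, so that each $u_i$ inherits the underlying term of $u$; this is exactly what makes the corresponding step term-preserving in the $r$-sequence and what allows the expanding equation to be bypassed. Confirming that these matched positions behave as claimed --- and that an expansion of $\bar s$ can interact with the chain in no other way --- is the heart of the argument and the crux of the \emph{moreover} statement.
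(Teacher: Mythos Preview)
Your proposal is correct and follows essentially the same case analysis as the paper's proof: Cases~\ref{c:incomp} and~\ref{c:above} yield $\bar r_i = \bar r_{i+1}$ and are handled by the trivial identity; Case~\ref{c:below} gives a genuine rewriting at a strictly smaller occurrence; Case~\ref{c:equal} gives a root rewriting whose left-hand side is compound; and the \emph{moreover} clause follows by inspecting which equations arise in each case.

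One small comment: your final paragraph over-dramatizes the subcase of Case~\ref{c:above} with $u=s$ and $\eps$ of the form $v \approx f(v_1,\dots,v_n)$. That each $u_i$ carries the same underlying term as $u$ is already recorded explicitly in Definition~\ref{def:quasi}, so there is nothing further to \emph{verify} here; it is definitional, and the paper accordingly passes over it without comment. Your instinct that this is where an expanding equation in the original derivation gets absorbed into a trivial step of the $r$-sequence is exactly right, but the work was done when $\succeq'$ was defined, not in the present lemma.
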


\begin{proof}
Let  $0 \le i <k$. If $r_i \succeq' r_{i+1}$ follows from the rules of either Case \ref{c:incomp} or Case \ref{c:above}, then $\bar r_i =\bar r_{i+1}$, and this is a rewriting step with the trivial equation $v \approx v$.

Assume next that $r_i \succeq' r_{i+1}$ follows by   Case \ref{c:below} or Case \ref{c:equal}. Then
$r_i  \le t$, $r_{i+1} \le t'$ where
$t=t_j$ for some $j$ and $t'$ is either $t_{j-1}$ or $t_{j+1}$. Let $t'$ be obtained from $t$ by rewriting
$s\le t$ into $s'\le t'$ using $\eps$.

In Case \ref{c:equal}, $\eps$ has the form $f(v_1,\dots,v_i) \approx g(w_1,\dots,w_j)$ or   $f(v_1,\dots,v_i) \approx w$,
 $s=r_i$ and $s'=r_{i+1}$ and hence $\bar r_{i+1}$ is obtained from $\bar r_{i}$ by rewriting with  $\eps$, applied to the entire term, as required.

Finally, let the situation be as in Case \ref{c:below}. Then $s < r_i$, $s' < r_{i+1}$, and hence
$\bar r_{i+1}$ is obtained from $\bar r_i$ by rewriting a strict smaller occurrence than $r_i$ with $\eps$.

This proves the first assertion. Now, in Cases \ref{c:incomp}, \ref{c:above}, and \ref{c:equal}, the equality connecting
$r_i$ and $ r_{i+1}$ does not have the form $v \approx f(\mathbf{w})$, while in Case \ref{c:below},
the equality is applied to a strict smaller occurrence than  $r_i$. The second assertion follows.
\end{proof}

Now let $R$ be the collection of all occurrences appearing in the derivation $r_0, \dots r_k$. Choose a function $\varphi$ from $R$ to $X$ satisfying
\begin{enumerate}
\item $\varphi(r)=\varphi(r')$ if and only if $\Sigma \models \bar r \approx \bar r'$.
\item $\Sigma \models \bar r \approx x$ implies $\varphi(r)=x$ for all $x \in X$.
\end{enumerate}
As $X$ is infinite such a  function exists.
We now define another sequence of occurrences. For $0 \le i <k $, if $r_i$ has the form $f(d_{1}, \dots, d_{n})$ set $u_i := f(\varphi(d_1), \dots, \varphi(d_n))$. In addition,
set $u_k=r_k$.
\begin{lemma}
The sequence $u_1, \dots, u_k$ can be extended to a derivation using only equations from $\Sigma_1 \cup \{v \approx v\}$.
\end{lemma}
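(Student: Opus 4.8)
The plan is to show that for every $0 \le i < k$ the term $\bar u_{i+1}$ is obtained from $\bar u_i$ by a single rewriting step over $\Sigma_1 \cup \{v \approx v\}$, so that the sequence $u_0, u_1, \dots, u_k$ becomes the asserted derivation once the obvious syntactic data (the equation used and the matched occurrences) are attached. I would run through exactly the same four cases that governed the step $r_i \succeq' r_{i+1}$ in the proof of Lemma~\ref{lem:SSeq}, exploiting two features already in hand: first, that each $\bar r_i$ with $i<k$ is headed by a function symbol of $\Sigma_1$ while $\bar r_k = y$; and second, that consecutive $\bar r_i$ are $\Sigma$-equivalent, so that $\varphi$ --- being constant on $\Sigma$-equivalence classes by its first defining property --- assigns matching variables to matching arguments.

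The three ``inert'' cases should dispose of themselves. If the step falls under Case~\ref{c:incomp} or Case~\ref{c:above}, then $\bar r_i = \bar r_{i+1}$, so the two occurrences have the same top symbol and, argument by argument, identical subterms; hence $\bar u_i = \bar u_{i+1}$ and the step is the trivial $v\approx v$. If it falls under Case~\ref{c:below}, the rewriting takes place strictly below the root of $r_i$, so $r_i$ and $r_{i+1}$ share their top symbol and all but one argument, and the single argument that changes, say $d_{j_0}$ into $d_{j_0}'$, is replaced by a $\Sigma$-equivalent term. The first defining property of $\varphi$ then yields $\varphi(d_{j_0}) = \varphi(d_{j_0}')$, so once more $\bar u_i = \bar u_{i+1}$ and we again use $v\approx v$.

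The substantive case is Case~\ref{c:equal}, where $\eps$ rewrites all of $r_i$ and has the form $f(\mathbf{v}) \approx g(\mathbf{w})$ or $f(\mathbf{v}) \approx w$. Since the matched left-hand side is a function term whose top symbol is that of $\bar r_i$, namely some $f_i$ in the signature of $\Sigma_1$, and the signatures of $\Sigma_1$ and $\Sigma_2$ are disjoint, $\eps$ must lie in $\Sigma_1 \cup \Sigma_1^{\delta}$. I would then define a substitution $\sigma'$ sending each variable of $\eps$ to the $\varphi$-value of any occurrence it matches during the rewriting, and check that $\bar u_i = f(\mathbf{v})\sigma'$ and $\bar u_{i+1} = g(\mathbf{w})\sigma'$ (respectively $w\sigma'$), so that $u_i \to u_{i+1}$ is a genuine rewriting by $\eps \in \Sigma_1$. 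The terminal step, which is necessarily of the form $f(\mathbf{v}) \approx w$, must land on $\bar u_k = y$: the occurrence matched to $w$ has underlying term $y$, so the second defining property of $\varphi$ forces $\sigma'(w) = y$.

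The hard part will be the bookkeeping inside Case~\ref{c:equal}: verifying that $\varphi$ commutes with a top-level $\Sigma_1$-rewriting, that is, that collapsing every argument subterm to a variable is consistent with applying $\eps$ at the root. Concretely, the delicate point is the well-definedness of $\sigma'$, which requires that all occurrences matched to one and the same variable of $\eps$ carry identical underlying terms and hence receive a common $\varphi$-value; this is where the first defining property of $\varphi$ is indispensable, together with the observation that every genuine $\Sigma_2$-rewrite has been confined to proper subterms that $\varphi$ erases. Granting this, concatenating the steps produces a derivation from $\bar u_0 = F(\mathbf{x})$ to $\bar u_k = y$ over $\Sigma_1 \cup \{v\approx v\}$; in other words $\Sigma_1 \models F(\mathbf{x}) \approx y$, which is precisely the conclusion of Lemma~\ref{lem:weakindep-union}.
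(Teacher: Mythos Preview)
Your proposal is correct and follows essentially the same approach as the paper's own proof: group the ``inert'' cases (Cases~\ref{c:incomp}, \ref{c:below}, \ref{c:above}) together and use the first defining property of $\varphi$ to conclude $\bar u_i=\bar u_{i+1}$, then in Case~\ref{c:equal} use that the top symbol of $\bar r_i$ lies in the signature of $\Sigma_1$ to force $\eps\in\Sigma_1$, with the properties of $\varphi$ ensuring $\bar u_i$ rewrites to $\bar u_{i+1}$ by $\eps$. Your explicit substitution $\sigma'$ and the discussion of its well-definedness simply make precise what the paper compresses into the phrase ``once again by our definition of~$\varphi$''.
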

\begin{proof}
For $0 \le i < k$, consider the rewriting step from $r_i$ to $r_{i+1}$ by the equation $\eps$. If this rewriting step involves rewriting a strictly smaller occurrence than $r_i$, or if it is trivial,
then $\bar u_i= \bar u_{i+1}$ by our definition of $\varphi$ and so $u_{i+1}$ follows from $u_i$ by the equation $v \approx v$. Otherwise, $\bar r_i$ must be an instance
 of the left hand side of $\eps$. By Lemma \ref{lem:SSeq}
$\eps$ cannot have the form $v \approx f(\mathbf{w})$, and hence must look like $f(v_1,\dots,v_i) \approx g(w_1,\dots,w_j)$ or   $f(v_1,\dots,v_i) \approx w$. As mentioned above,
$r_i$, and hence $u_i$ has the form $f_i(\dots)$, with $f_i$ being from the signature of $\Sigma_1$. It follows that $\eps \in \Sigma_1$. Moreover, as $\eps$ is linear,
$\bar u_i$ is an instance of $\eps$ and is rewritten into $\bar u_{i+1}$ by $\eps$, once again by our definition of $\varphi$. The result follows.
\end{proof}

We are now ready to show Lemma \ref{lem:weakindep-union}.
\begin{proof}
The previous lemma shows that $\Sigma_1 \models \bar u_0 \approx \bar u_k$.
But $F(\mathbf{x}) = \bar t_0= \bar r_0= \bar u_0$, where the last equality follows from the second property of $\varphi$. In addition $\bar u_k=\bar r_k =y$ and hence
$\Sigma_1 \models F(\mathbf{x})\approx y$. The result follows.
\end{proof}

\section*{Concluding remarks}

Our results provide further evidence in support of the modularity conjecture.
Theorem \ref{thm:modconjlinid} shows that the conjecture holds in the special case of linear idempotent varieties.
Our methods rely heavily on the properties of linear varieties, indicating that the general hypothesis is unlikely to be solved using this line of inquiry.

Our results about linear varieties, in particular Lemma \ref{lem:weakindep-union}, could prove useful in studying other properties of linear systems, potentially far removed from the topics of this paper.

\section* {Acknowledgment}
The first author  has received funding from the
European Union Seventh Framework Programme (FP7/2007-2013) under
grant agreement no.\ PCOFUND-GA-2009-246542 and from the Foundation for
Science and Technology of Portugal.

 The second author acknowledges support from FCT under Project PEst-OE/ MAT/UI0143/2011.

\end{document}